\def\cal{\mathcal}
\newtheorem{Theorem}{Theorem}[section]
\newtheorem{Proposition}{Proposition}[section]
\newtheorem{Lemma}{Lemma}[section]
\newtheorem{Corollary}{Corollary}[section]
\theoremstyle{definition}
\newtheorem{Definition}{Definition}[section]
\newtheorem{Remark}{Remark}
\newtheorem{Assumptions}{Hypothesis}
\def\supp{\mathop{\rm supp}\nolimits}
\def\R{{\mathbb{R}}}
\def\N{{\mathbb{N}}}
\def\itau12{\int_{\tau_1}^{\tau_2}}
\def\ve{\varepsilon}
\def\ds{\displaystyle}
\newcommand{\cY}{{\mathcal Y}}
\newcommand{\cA}{{\mathcal A}}
\title{Linear stabilization for a degenerate wave equation in non divergence form with drift}
\author{
{\sc Genni Fragnelli}\thanks{The author is a member of the  {\it Gruppo Nazionale per l'Analisi Ma\-te\-matica, la Probabilit\`a e le loro Applicazioni (GNAMPA)} of the Istituto Nazionale di Alta Matematica (INdAM) and a member of {\it UMI ``Modellistica Socio-Epidemiologica (MSE)''}. She is supported by the FFABR {\it Fondo per il finanziamento delle attivit\`a base di ricerca} 2017 and  by the DEB.HORIZON$_{-}$EU$_{-}$DM737 project 2022 {\it COntrollability of PDEs in the Applied Sciences (COPS)}. She also thanks the Project Horizon Europe Seeds 2021 {\it STEPS: STEerability and controllability of PDES in Agricultural and Physical models}, CUP: H91I21001640006.}\\
Department  of Ecological and Biological Sciences\\ Tuscia University\\ Largo dell'Universit\`a, 01100 Viterbo - Italy\\ email: genni.fragnelli@unitus.it\\
{\sc Dimitri Mugnai}\thanks{The author is a member of the Gruppo Nazionale per l'Analisi Ma\-te\-matica, la Probabilit\`a e le loro Applicazioni (GNAMPA) of the Istituto Nazionale di Alta Matematica (INdAM). Supported by the INdAM-GNAMPA Project 2022 "PDE ellittiche a diffusione mista", by the FFABR {\it Fondo per il finanziamento delle attivit\`a base di ricerca} 2017 and by the DEB.HORIZON$_{-}$EU$_{-}$DM737 project 2022 {\it COntrollability of PDEs in the Applied Sciences (COPS)}.}\\
Department  of Ecological and Biological Sciences\\ Tuscia University\\ Largo dell'Universit\`a, 01100 Viterbo - Italy\\ email:
dimitri.mugnai@unitus.it}
\date{}
\begin{document}

\maketitle

\vspace{0.3cm}

\begin{abstract}
We consider a degenerate wave equation in one dimension, with drift and in presence of a leading operator which is not in divergence form. We impose a homogeneous Dirichlet boundary condition where the degeneracy occurs and a boundary damping at the other endpoint. We provide some conditions for the uniform exponential decay of solutions for the associated Cauchy problem.
\end{abstract}

Keywords: degenerate wave equation, drift,  stabilization, exponential decay

MSC: 35L80, 93D23, 93D15

\section{Introduction}

It is well known that the displacement of a mass subjected to the action of spring is modelled by a nonlinearly damped oscillator and the displacement $u$ of
the mass is described by the scalar equation
\[
u''+h(u')+ ku+f(u)=0.
\]
Here $k>0$ is a physical parameter, $f$ is continuous and describes some nonlinear phenomenon, while $h(u')$ is the nonlinear damping. 
Set
\[
F(u)=\int_0^uf(t)dt
\]
and define the energy associated to a solution $u$ as
\[
E_u(t)=\frac{1}{2}(u'(t))^2+\frac{1}{2}ku^2(t)+F(u(t)).
\]
 Assuming that
\begin{equation}\label{condh}
sh(s)\geq 0 \quad \forall\, s\in \R,
\end{equation}
then
\[
E_u'(t)=-u'(t)h(u'(t))\leq0,
\]
which is a dissipation relation.

Similarly, consider a vibrating membrane fixed on the
boundary. Then the evolution of the displacement of a point $x$ of the membrane at
time $t$ is described by the wave equation
\[
u_{tt}-\Delta u+h(u_t)+f(u)=0 \qquad \mbox{in }(0,\infty)\times \Omega,
\]
where $\Omega$ is a (possibly bounded) domain of $\R^N$. The energy of a
solution $u$ is 
\[
E_u(t)=\int_\Omega \left[\frac{1}{2}\left(u_t^2+|\nabla u|^2\right)+F(u)\right]dx
\]
and if everything is smooth enough and $h$ is as before, we get
\[
E_u'(t)=-\int_\Omega u_th(u_t)dx\leq0.
\]
An analogous result can be proved if the wave equation is coupled with a damping in a portion $\Gamma$ of $\partial \Omega$ as
\[
\frac{\partial u}{\partial \nu}+h(u_t)=0 \qquad \mbox{ in }[0,\infty)\times \Gamma.
\]

>From a monotonicity property of the energy, it is natural to look for conditions which guarantee the {\em stability} of even more general problems, namely
\[
\|\nabla u\|_{L^2} +\|u_t\|_{L^2}\to 0 \mbox{ as }t\to\infty.
\]
In this respect, several conditions have been found, even when \eqref{condh} is not satisfied, also in presence of nonlinear sources (see, for instance, \cite{fmdcdss}, \cite{sicon}, \cite{HMV}, \cite{MV2002}, \cite{PS} and the two monographs \cite{alabaumon} and \cite{bc}).

However, all the previous results concern {\em nondegenerate} problems. On the other hand, the standard linear theory for transverse waves in a string leads to the classical wave equation
\[
\rho(x) u_{t t}(t,x)=\frac{\partial \mathcal T}{\partial x}(t,x) u_{x}(t,x)+\mathcal  T(t,x)u_{xx}(t,x),
\]
where $u(t,x)$ is the vertical displacement of the string from the $x$ axis at position $x$ and time $t$, $\rho(x)$ is the mass density of the string at position $x$, while $\mathcal T(t,x)$ denotes the tension in the string at position $x$ and time $t$. Dividing by $\rho(x)$, assuming $\mathcal T$ is independent of $t$, and setting  $a(x)=\mathcal T(x)\rho^{-1}(x)$, $b(x)=\mathcal T'(x)\rho^{-1}(x)$, we obtain
\[
u_{t t}(t,x)=a(x) u_{xx}(t,x)+b(x)u_{x}(t,x).
\]
If density is extremely large at some point, for instance $x=0$,  we can assume $a(0)=0$. The drift term $b$ may degenerate at $x=0$, as well.

A related equation in divergence form and without drift, namely
\[
u_{t t}(t,x)=(a(x) u_{x})_x(t,x),
\]
has been studied in \cite{alabau} (see also the arxiv version 2015) and \cite{ZG} for a general $a$ and for $a(x)=x^K$, $K\in(0,2)$, respectively. In both cases, boundary controllability was pursued via multiplier methods (\cite{alabau}) or spectral methods (\cite{ZG}). Moreover, in \cite{alabau}, stability results were proved, as well. Inspired by such a case, here we consider the  problem
\begin{equation}\label{wave}
\begin{cases}
y_{tt} - a(x)y_{xx} -b(x)y_x=0, & (t,x) \in Q_T,
\\
y_t(t,1)+\eta y_x(t,1)+ \beta y(t,1)=0,& t\in (0,T),
\\
y(t, 0)= 0,  & t >0,
\\
y(0,x)= y_0(x), \quad y_t(0,x)=y_1(x),& x \in (0, 1),
\end{cases}
\end{equation}
where $Q_T=(0,T) \times(0,1)$, $T>0$,  $a, b\in C^0[0,1]$, with $a>0$ on
$(0,1]$, $a(0)=0$ and  $\ds \frac{b}{a} \in L^1(0,1)$: hence, if $a(x)=x^K$, $K>0$, we can consider $b(x)=x^h$ for any $h>K-1$.  In the boundary term we take $\beta \ge0$ and  $\eta$ is the well-known absolutely continuous weight function
\[
\eta(x):=\exp\left \{\int_{\frac{1}{2}}^x\frac{b(s)}{a(s)}ds\right \}
, \quad x\in [0,1],
\]
introduced by Feller in a related contex \cite{F1} and used by
several authors, see, e.g., \cite{cfr}, \cite{favya} and \cite{mp}.  
Finally, the initial data $u_0$ and $u_1$ belong to suitable weighted spaces.

The main feature in this problem is that $a$ degenerates at $x=0$ (with $b$ possibly degenerate, as well) and that the leading operator is not in the usual divergence form. As a consequence, classical methods cannot be used directly to study such a problem and different approach is needed, see the following section.

As for the function $a$, we consider two cases: $a$ can be weakly degenerate or strongly degenerate. More precisely, we have the following standard definition.

\begin{Definition}
A function $a$ is {\it weakly degenerate at $0$}, (WD) for short, if $a\in C^0[0,1]\cap C^1(0,1]$ is such that
$a(0)=0$, $a>0$ on $(0,1]$ and, if 
\begin{equation}\label{stima_a}
K:=\sup_{x \in (0,1]}\frac{x|a'(x)|}{a(x)},
\end{equation}
then $K\in (0,1)$.
\end{Definition}

\begin{Definition}
A function $a$ is {\it strongly degenerate at $0$}, (SD) for short, if $a\in C^1[0,1]$ is such that
$a(0)=0$, $a>0$ on $(0,1]$ and  in
\eqref{stima_a} we have $K\in [1,2)$.
\end{Definition}

In the previous definition we always assume that $K<2$, since it is essential in Hypothesis \ref{Ass2} below.

\begin{Remark}
Observe that \eqref{stima_a} implies that the function
\begin{equation}\label{crescente}
x \mapsto \frac{x^\gamma}{a(x)}
\end{equation}
is nondecreasing in $(0,1]$ for all $\gamma \ge K$. In particular, Hypothesis \ref{Ass0} below is satisfied. Moreover,
\begin{equation}\label{limite}
\lim_{x\rightarrow 0}  \frac{x^\gamma}{a(x)} =0
\end{equation}
for all $\gamma >K$ and
\begin{equation}\label{defM}
\left|\frac{x^\gamma b(x)}{a(x)}\right| \le \frac{1}{a(1)} \|b\|_{L^\infty(0,1)}
\end{equation}
for all $\gamma \ge K$, assuming $b \in L^\infty(0,1)$.
\end{Remark}

\section{Preliminary results and well posedness}\label{section2}

In this section we introduce the functional setting needed to treat our problem. However, our assumptions here are more general than those required to get the desired stability and have an independent interest.

We start assuming a very modest requirement, which will be assumed throughout the paper.
\begin{Assumptions}\label{basic}
Functions $a$ and $b$ are continuous in $[0,1]$ and such that
$\dfrac{b}{a}\in L^1(0,1)$.
\end{Assumptions}

\begin{Remark}\label{Rem2}$\ $\\
1. We notice that, at this stage, $a$ may not degenerate at $x=0$. However, if it is (WD) 
then $\ds\frac{1}{a}\in L^1(0,1)$ and $b$ can not degenerate. If $a$ is (SD) then $\ds\frac{1}{a} \notin L^1(0,1)$, hence the assumption $\ds\dfrac{b}{a}\in L^1(0,1)$ implies $b(0)=0$.  In this case $b$ can be (WD) or (SD).\\
2. If  $a$ is (WD) or (SD) with $K=1$ then \eqref{defM} immediately implies that $\ds\frac{xb}{a}$ is bounded.
\end{Remark}

If Hypothesis \ref{basic} holds, it is clear that the function $\eta:[0,1]\to \R$ introduced before is well defined and we immediately find that $\eta\in C^0[0,1]\cap
C^1(0,1]$ is a strictly positive function, which is {\sl bounded above and below by a positive constant}. Notice also that $\eta$ can be extended to a function of class $C^1[0,1]$ when $b$ degenerates at 0 not slower than $a$, for instance if $a(x)=x^K$ and $b(x)=x^h$ with $K\leq h$.

Now, we are ready to go back to problem \eqref{wave} and study its well-posedness. To do that, let us
define
\[
\sigma(x):=\frac{a(x)}{\eta(x)},
\]
which is a continuous function in $[0,1]$, independently of the possible degeneracy of $a$. Moreover, observe that if $y$ is a sufficiently smooth function, e.g. $y \in
W^{2,1}_{\text{loc}}(0,1)$, then we can write
\begin{equation}\label{defsigma}
Ay:=ay''+by'
\end{equation}
as
\[
 Ay= \sigma(\eta y')'.
\]

Following \cite{cfr}, let us consider
the following Hilbert spaces with the related inner products:
\[
 L^2_{\frac{1}{\sigma}}(0,1) :=\left\{ u \in L^2(0,1)\; \big|\; \|u\|_{ \frac{1}{\sigma}}<\infty \right\},
 \;  \langle u,v\rangle_{\frac{1}{\sigma}}:= \int_0^1u v\frac{1}{\sigma}dx,
\]
for every $u,v \in L^2_{\frac{1}{\sigma}}(0,1)$;
\[
H^1_{\frac{1}{\sigma}}(0,1) :=L^2_{\frac{1}{\sigma}}(0,1)\cap
H^1(0,1),
\;  \langle u,v\rangle_1 :=   \langle u,v\rangle_{\frac{1}{\sigma}} + \int_0^1\eta  u'v'dx,\]  
for every $u,v \in H^1_{\frac{1}{\sigma}}(0,1)$ and
\[
H^2_{\frac{1}{\sigma}}(0,1) := \Big\{ u \in
H^1_{\frac{1}{\sigma}}(0,1)\; \big|\;Au \in
L^2_{\frac{1}{\sigma}}(0,1)\Big\},
\;   \langle u,v\rangle_2 := \langle u,v\rangle_1+  \langle Au,Av\rangle_{\frac{1}{\sigma}},
\]
for every $u,v \in H^2_{\frac{1}{\sigma}}(0,1)$. The previous inner products obviously induce the related respective norms
\[
\|u\|^2_{\frac{1}{\sigma}} = \int_0^1 \frac{u^2}{\sigma}dx, \quad
\|u\|^2_{1,\frac{1}{\sigma}} = \|u\|^2_{\frac{1}{\sigma}} + \int_0^1\eta (u')^2 dx
\]
and
\[
\|u\|^2_{2, \frac{1}{\sigma}} = \|u\|^2_{1,\frac{1}{\sigma}} + \int_0^1 \sigma [(\eta u')']^2dx.
\]
 Moreover, consider the spaces
\[
H^1_{\frac{1}{\sigma},0}(0,1) := \{ u \in H^1_{\frac{1}{\sigma}}(0,1) : u(0)=0\}
\]
and
\[
H^2_{\frac{1}{\sigma},0}(0,1):= \{ u \in H^2_{\frac{1}{\sigma}}(0,1) : u(0)=0\},
\]
endowed with the previous inner products and related norms.

\begin{Remark}
Being $\eta$ bounded {\it above} and {\it below} by positive constants, it is clear that the inner product
with weight $\eta$ is equivalent to the standard one without weight. However, it will be clear soon that in this way we have a quite better functional setting, as Corollary \ref{equivalenze} below will show.
\end{Remark}

\begin{Assumptions}\label{Ass0}
Hypothesis \ref{basic} holds. In addition, $a$ is such that $a(0)=0$, $a>0$ on
$(0,1]$ and there exists $K>0$  such that  the function
\[
 x \longmapsto\dfrac{x^K}{a(x)}
\]
is nondecreasing in a right neighborhood of $x=0$. 
\end{Assumptions}

Notice that here we require only continuity on $a$ (and no differentiability); moreover, the monotonicity property required only near $0$ holds globally in $(0,1]$ if $a$ is (WD) or (SD).

Proceeding as in \cite{bfmu} and using the fact that $v(0)=0$ for all $v\in H^1_{\frac{1}{\sigma},0}(0,1)$, one has
 \begin{Proposition}[Hardy-Poincar\'e Inequality]\label{H1a} $\,$
 Assume Hypothesis $\ref{Ass0}$. Then, there exists
 $C_{HP}>0$ such that
 \begin{equation}\int_0^1 v^2 \frac{1}{\sigma} dx
 \le C_{HP} \int_0^1(v')^2dx\quad \forall \; v\in H^1_{\frac{1}{\sigma},0}(0,1).
 \end{equation}
 \end{Proposition}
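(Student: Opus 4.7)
The plan is to reduce to a classical Hardy-type estimate by exploiting both the boundedness of the weight $\eta$ and the monotonicity assumption on $a$. Since $\dfrac{1}{\sigma}=\dfrac{\eta}{a}$ and $\eta$ is bounded above by a positive constant $M$, it suffices to prove that
\[
\int_0^1 \frac{v^2}{a}\,dx \le C \int_0^1 (v')^2\,dx \qquad \forall\, v\in H^1_{\frac{1}{\sigma},0}(0,1),
\]
and the constant $C_{HP}=MC$ will do the job. I would fix $\delta\in(0,1)$ small enough that the function $p(x):=x^K/a(x)$ is nondecreasing on $(0,\delta]$ (such $\delta$ exists by Hypothesis \ref{Ass0}), and split the integral as $\int_0^1 = \int_0^\delta + \int_\delta^1$.

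On the \emph{regular part} $(\delta,1)$ the weight $1/a$ is bounded by $1/m_\delta$, where $m_\delta:=\min_{[\delta,1]}a>0$. Using $v(0)=0$ and Cauchy--Schwarz on $v(x)=\int_0^x v'(s)\,ds$, one has $|v(x)|^2\le x\int_0^x (v'(s))^2 ds\le \int_0^1(v')^2 ds$, so
\[
\int_\delta^1 \frac{v^2}{a}\,dx \le \frac{1-\delta}{m_\delta}\int_0^1 (v')^2\,dx .
\]

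On the \emph{degenerate part} $(0,\delta)$ I would use the monotonicity of $p$: since $p(x)\le p(\delta)$ for $x\in(0,\delta]$, we get $1/a(x)\le p(\delta)/x^K$. Hence
\[
\int_0^\delta \frac{v^2}{a}\,dx \le p(\delta)\int_0^\delta \frac{v^2}{x^K}\,dx.
\]
Now apply the Cauchy--Schwarz bound $v(x)^2\le x\int_0^x (v'(s))^2 ds$ again and swap the order of integration by Fubini:
\[
\int_0^\delta \frac{v^2}{x^K}\,dx \le \int_0^\delta x^{1-K}\int_0^x (v'(s))^2 ds\, dx = \int_0^\delta (v'(s))^2 \int_s^\delta x^{1-K}\,dx\, ds.
\]
Because $K<2$ (a tacit restriction that in the relevant (WD) and (SD) settings is built into the definitions), the inner integral is dominated by $\delta^{2-K}/(2-K)$, yielding
\[
\int_0^\delta \frac{v^2}{x^K}\,dx \le \frac{\delta^{2-K}}{2-K}\int_0^\delta (v')^2\,ds.
\]

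Adding the two contributions and multiplying by $M$ gives the stated inequality with
\[
C_{HP}=M\left(\frac{p(\delta)\,\delta^{2-K}}{2-K}+\frac{1-\delta}{m_\delta}\right).
\]
The only real obstacle is the integrability of $x^{1-K}$ at $0$, which is precisely where the bound $K<2$ is used; without it the Fubini step would diverge. Everything else is routine, and the argument is essentially the one carried out in \cite{bfmu}, adapted to the weight $1/\sigma$ via the equivalence with $1/a$.
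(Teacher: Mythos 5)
Your argument is correct in the range it covers and follows the same route the paper has in mind: the paper offers no written proof beyond ``proceeding as in \cite{bfmu} and using $v(0)=0$'', and what is intended is precisely your reduction from the weight $\frac{1}{\sigma}$ to $\frac1a$ via the two-sided bounds on $\eta$, the splitting of $(0,1)$ at a point $\delta$ where $x\mapsto x^K/a(x)$ is nondecreasing, and the elementary Cauchy--Schwarz/Fubini Hardy estimate near the degeneracy. Each step checks out: $m_\delta=\min_{[\delta,1]}a>0$ by continuity and positivity of $a$ on $(0,1]$, the bound $|v(x)|^2\le x\int_0^x (v')^2\,ds$ uses exactly $v(0)=0$ and $v\in H^1(0,1)$, and your explicit constant is fine.

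The only point that deserves comment is the role of $K<2$, which you call a tacit restriction. Hypothesis \ref{Ass0} as literally stated only posits the existence of \emph{some} $K>0$, and your Fubini step (bounding $\int_s^\delta x^{1-K}\,dx$ by $\delta^{2-K}/(2-K)$) genuinely requires $K<2$; at $K=2$ this bound degenerates into a logarithm and one would have to invoke the classical Hardy inequality instead, and for $K>2$ the inequality itself fails under Hypothesis \ref{Ass0} alone: take $a(x)=x^3$, $b\equiv 0$ (so $\sigma=a$, and $x^3/a\equiv 1$ is nondecreasing) and $v_\ve(x)=x^{1+\ve}$, which lies in $H^1_{\frac{1}{\sigma},0}(0,1)$ with $\int_0^1(v_\ve')^2\,dx$ bounded while $\int_0^1 v_\ve^2/\sigma\,dx=1/(2\ve)\to\infty$. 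So the restriction you invoke is not a removable defect of your proof but a condition that must be read into the hypothesis, consistently with the paper's standing convention $K<2$ and with the (WD)/(SD) setting in which the proposition is actually applied. With that reading, your proof is complete and coincides in substance with the one the paper delegates to \cite{bfmu}.
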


In particular, we have  the equivalence below.
\begin{Corollary}
\label{equivalenze}Assume Hypothesis $\ref{Ass0}$. Then the two norms
$
\|u\|_{1,\frac{1}{\sigma}}^2
$
and
\[
\|u\|_1^2:= \int_0^1 (u')^2dx, 
\]
are equivalent for all  $u \in H^1_{\frac{1}{\sigma},0}(0,1)$. In particular,
\[
\|u\|_1^2 \le \frac{1}{\min_{[0,1]} \eta} \|u\|_{1,\frac{1}{\sigma}}^2 \quad \text{and} \quad  \|u\|_{1,\frac{1}{\sigma}}^2 \le (C_{HP}+\max_{[0,1]} \eta) \|u\|_1^2,
\]
where $C_{HP}$ is the Hardy-Poincar\'e constant introduced in Proposition $\ref{H1a}$.
\end{Corollary}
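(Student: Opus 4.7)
The plan is to derive the two claimed inequalities by combining the elementary fact that $\eta$ is bounded above and below by positive constants with the Hardy--Poincar\'e inequality of Proposition \ref{H1a}. Since $\|u\|_{1,\frac{1}{\sigma}}^2 = \int_0^1 \frac{u^2}{\sigma}\,dx + \int_0^1 \eta\,(u')^2\,dx$, both inequalities reduce to comparing $\int_0^1 \eta (u')^2\,dx$ and $\int_0^1 (u')^2\,dx$, plus controlling the weighted $L^2$ term by the unweighted $L^2$ norm of $u'$.

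First I would prove the lower bound for $\|u\|_{1,\frac{1}{\sigma}}^2$: since $\eta \ge \min_{[0,1]}\eta > 0$ pointwise on $[0,1]$, we have
\[
\|u\|_{1,\frac{1}{\sigma}}^2 \;\ge\; \int_0^1 \eta\,(u')^2\,dx \;\ge\; \min_{[0,1]}\eta \int_0^1 (u')^2\,dx \;=\; \min_{[0,1]}\eta\;\|u\|_1^2,
\]
which gives the first inequality after dividing by $\min_{[0,1]}\eta$.

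Next I would establish the upper bound. Using $\eta \le \max_{[0,1]}\eta$ pointwise yields
\[
\int_0^1 \eta\,(u')^2\,dx \;\le\; \max_{[0,1]}\eta\;\|u\|_1^2,
\]
while Proposition \ref{H1a}, which applies because $u \in H^1_{\frac{1}{\sigma},0}(0,1)$ (in particular $u(0)=0$), yields
\[
\int_0^1 \frac{u^2}{\sigma}\,dx \;\le\; C_{HP}\,\|u\|_1^2.
\]
Adding these two estimates gives $\|u\|_{1,\frac{1}{\sigma}}^2 \le (C_{HP}+\max_{[0,1]}\eta)\|u\|_1^2$, completing the proof of equivalence.

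There is no real obstacle here: the argument is entirely a matter of chaining pointwise bounds on the weight $\eta$ with the Hardy--Poincar\'e inequality already established. The only point worth being careful about is verifying that the boundary condition $u(0)=0$ is needed to invoke Proposition \ref{H1a}, since the unweighted Dirichlet boundary condition is precisely what makes the weighted $L^2$ norm controllable by $\|u'\|_{L^2}$ despite the degeneracy of $\sigma$ at the origin.
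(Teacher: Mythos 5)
Your proof is correct and follows exactly the route the paper intends for this corollary (which it leaves implicit): bound $\eta$ pointwise between $\min_{[0,1]}\eta$ and $\max_{[0,1]}\eta$, and control $\int_0^1 u^2/\sigma\,dx$ by $C_{HP}\|u\|_1^2$ via Proposition \ref{H1a}, which applies precisely because $u(0)=0$. Nothing is missing.
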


Now, define the domain $D(A)$ of
the operator $A$ given in \eqref{defsigma} as
\[
D(A) = H^2_{{\frac{1}{\sigma},0}}(0,1).
\]

We start with a set of results, actually of independent interest, which describe the functional setting we shall use.

\begin{Lemma}\label{intparti} Assume Hypothesis $\ref{basic}$.
For all  \:$(u,v)\in D(A)\times
H^1_{{\frac{1}{\sigma},0}}(0,1)$ one has
\begin{equation} \label{Green}
<Au,v>_{\frac{1}{\sigma}}= -\int_0^1\eta u'v'dx+ (\eta u' v)(1).
\end{equation}
\end{Lemma}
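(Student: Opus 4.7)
My first move is to use the ``divergence-like'' representation from \eqref{defsigma}, writing $Au = \sigma(\eta u')'$. This identifies
\[
\langle Au, v\rangle_{\frac{1}{\sigma}} = \int_0^1 \frac{Au}{\sigma}\, v\, dx = \int_0^1 (\eta u')'\, v\, dx.
\]
Here, since $Au \in L^2_{\frac{1}{\sigma}}(0,1)$ because $u \in D(A)$ and $v \in L^2_{\frac{1}{\sigma}}(0,1)$ because $v \in H^1_{\frac{1}{\sigma},0}(0,1)$, Cauchy--Schwarz in the weighted inner product guarantees absolute convergence of this integral; moreover $\int_0^1 \eta u' v'\, dx$ converges because $\eta$ is bounded on $[0,1]$ and $u', v' \in L^2(0,1)$.

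Next, I sidestep the possibly degenerate endpoint $x=0$ by working first on $(\varepsilon, 1)$. On such an interval $\sigma$ is bounded away from zero, so $\eta u' \in W^{1,1}(\varepsilon, 1)$, and classical integration by parts gives
\[
\int_\varepsilon^1 (\eta u')'\, v\, dx = (\eta u' v)(1) - (\eta u' v)(\varepsilon) - \int_\varepsilon^1 \eta u' v'\, dx.
\]
By the integrability noted above, as $\varepsilon\to 0^+$ both integrals converge to their counterparts on $(0,1)$, and the boundary term at $x=1$ is already the desired $(\eta u' v)(1)$. Thus the identity \eqref{Green} reduces to showing that $(\eta u' v)(\varepsilon) \to 0$ as $\varepsilon \to 0^+$.

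The core technical step, and the principal obstacle, is this boundary vanishing at $x=0$. Setting $g(x) := (\eta u' v)(x)$, a direct check shows $g' = (\eta u')' v + \eta u' v' \in L^1(0,1)$, so $g$ extends continuously to $[0,1]$ with some limit $L := \lim_{x\to 0^+} g(x)$; the task is to prove $L = 0$. The cleanest route is a cutoff argument: take $\varphi_n \in C^\infty([0,1])$ with $\varphi_n \equiv 0$ on $[0, 1/n]$, $\varphi_n \equiv 1$ on $[2/n, 1]$, and apply the identity on $(\varepsilon,1)$ to $\varphi_n v$, for which the boundary contribution at $0$ vanishes trivially. Passing to the limit $n \to \infty$ using $v(0)=0$, the equivalence of norms from Corollary \ref{equivalenze}, and the Hardy--Poincar\'e bound of Proposition \ref{H1a} to control the error from $\varphi_n'$ in the weighted spaces, one pins down $L=0$ and recovers \eqref{Green}.
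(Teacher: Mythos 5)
Your core mechanism is the same as the paper's: integrate by parts away from the degenerate endpoint and kill the contribution at $x=0$ by approximating $v$ with functions supported in $(0,1]$. The paper implements this by showing that $H^1_c(0,1)=\{v\in H^1(0,1)\,:\,\supp\{v\}\subset(0,1]\}$ is dense in $H^1_{\frac{1}{\sigma},0}(0,1)$ (via piecewise-linear cutoffs $\xi_n$, i.e.\ your $\varphi_n$) and that the functional $\varPhi(v)=\int_0^1\big((au''+bu')v\frac{1}{\sigma}+\eta u'v'\big)dx-(\eta u'v)(1)$ is bounded on $H^1_{\frac{1}{\sigma},0}(0,1)$ and vanishes on $H^1_c(0,1)$; your $\varepsilon$-truncation plus cutoff limit is the hands-on version of the same argument. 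Two remarks. First, a misstep in justification: in the final step you invoke Corollary \ref{equivalenze} and the Hardy--Poincar\'e inequality of Proposition \ref{H1a}, but both are established under Hypothesis \ref{Ass0}, whereas Lemma \ref{intparti} assumes only Hypothesis \ref{basic}, under which $a$ need not degenerate nor satisfy the monotonicity condition; as written, your control of the error term coming from $\varphi_n'$ rests on tools not available at this stage. The fix is elementary and is all that is really needed: since $v(0)=0$, Cauchy--Schwarz gives $|v(x)|\le\sqrt{x}\,\|v'\|_{L^2(0,1)}$, hence
\[
\left|\int_{1/n}^{2/n}\eta u'\varphi_n' v\,dx\right|\le C\,\|\eta\|_{\infty}\,n\,\sqrt{\tfrac{2}{n}}\,\|v'\|_{L^2(0,1)}\,\sqrt{\tfrac{1}{n}}\,\|u'\|_{L^2(1/n,2/n)}\longrightarrow 0,
\]
using only the boundedness of $\eta$ and the absolute continuity of $\int (u')^2$. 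Second, the detour through $g=\eta u'v\in W^{1,1}(0,1)$ and the limit $L$ is redundant: once the cutoff computation is carried out, it yields \eqref{Green} directly, without ever isolating $L$; if you do want the pointwise vanishing at $0$, that is exactly Lemma \ref{lemmalimite}.1, which the paper proves in the Appendix by the $W^{1,1}$-plus-contradiction argument (using $|u(x)|\le\sqrt{x}\|u'\|_{L^2}$ against $y'\in L^2$) rather than by a cutoff.
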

\begin{proof}
As a first step, we consider the space $H^1_c(0,1):=\big\{v\in
H^1(0,1)\:|\:\supp\{v\}\subset (0,1]\big\}$.

As in the proof of \cite[Lemma 2.1]{cfr1}, we can see that  $H^1_c(0,1)$ is dense in
$H^1_{{\frac{1}{\sigma},0}}(0,1)$. Indeed, fix $v\in
H^1_{{\frac{1}{\sigma},0}}(0,1)$ and consider the sequence
$(v_n)_{n \ge 3}$, where $v_n: =\xi_nv\in H^1_c(0,1)$ and
\[
\xi_n(x):=\left\{
\begin{array}{ll}
0,&x\in\; \left[0,1/n\right],\\
1,&x\in \;  \left[2/n, 1\right],
\\
nx-1, &x\in\;  \left( 1/n ,2/n \right);
\end{array}
\right.
\]
then  $v_n\rightarrow v$ in
$H^1_{{\frac{1}{\sigma},0}}(0,1)$.

Now, as in \cite{cfr}, consider
$$\varPhi(v):=\int_0^1\big((au'' +bu')v\frac{1}{\sigma} + \eta
u'v'\big)dx-(\eta u' v)(1),$$ with $u\in H^2_{{\frac{1}{\sigma},0}}(0,1)$. Then,
$\varPhi$ is a bounded linear functional on
$H^1_{{\frac{1}{\sigma},0}}(0,1)$. Moreover, $\varPhi=0$ on
$H^1_c(0,1)$. Indeed, taking $v\in H^1_c(0,1)$,  one has  that
\[
\int_0^1(au'' + bu')v\frac{1}{\sigma}dx
 =
 \int_0^1 \sigma(\eta u')'v\frac{1}{\sigma}dx
 =
-\int_0^1\eta u'v'dx + (\eta u' v)(1).
\]
Thus, $\varPhi=0$ on $H^1_{{\frac{1}{\sigma},0}}(0,1)$, that is
\eqref{Green} holds.
\end{proof}


To evaluate boundary terms the following results are important. Since the proofs are similar to those of \cite[Lemma 3.2]{bfmu}, we postpone them to the Appendix.
\begin{Lemma}\label{lemmalimite} 
\begin{enumerate}
\item Assume Hypothesis $\ref{basic}$. If $y \in H^2_{\frac{1}{\sigma}}(0,1)$ and if $u \in H^1_{\frac{1}{\sigma},0}(0,1)$, then $\lim_{x \rightarrow 0} u(x) y'(x)=0$.
\item Assume Hypothesis $\ref{Ass0}$. If $u\in D(A)$, then $xu'(\eta u')'\in L^1(0,1)$.
\item Assume Hypothesis $\ref{Ass0}$. If $u\in D(A)$ and $K\leq 1$, then $\lim_{x \rightarrow 0} x (u'(x))^2=0$.
\item Assume Hypothesis $\ref{Ass0}$. If  $u \in D(A)$,  $K>1$ and $\ds\frac{xb}{a} \in L^\infty(0,1), $ then $\ds\lim_{x\rightarrow 0} x  (u'(x))^2=0$.
\item Assume  Hypothesis $\ref{Ass0}$. If  $u \in H^1_{\frac{1}{\sigma}}(0,1)$, then   $\ds
\lim_{x\rightarrow 0} \frac{x}{a}u^2(x)=0.$
\end{enumerate}
\end{Lemma}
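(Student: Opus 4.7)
The plan is a two-step scheme applied to each statement: show that the scalar quantity in question admits an absolutely continuous representative on $(0,1]$ (so a one-sided limit at $0$ exists), then exclude a nonzero limit by invoking $L^2$-integrability of $u'$ (or $y'$). Part~2 produces an auxiliary used in Parts~3--4, so I would prove it first. From $(\eta u')'=Au/\sigma=Au\,\eta/a$ and Cauchy--Schwarz,
\[
\int_0^1|xu'(\eta u')'|\,dx\le\Bigl(\int_0^1\tfrac{x^2\eta}{a}(u')^2\,dx\Bigr)^{1/2}\|Au\|_{\frac{1}{\sigma}}<\infty,
\]
because $x^2/a=x^{2-K}(x^K/a)$ is bounded on $(0,1]$ (the first factor is bounded since $K<2$, the second by Hypothesis~\ref{Ass0} combined with continuity of $a$ on $[\delta,1]$), $\eta\in L^\infty$, and $u'\in L^2$.

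\textbf{Part 1.} Differentiating, $(u\eta y')'=u'\eta y'+u\,Ay/\sigma$. The first summand is in $L^1$ because $u',y'\in L^2$ and $\eta$ is bounded; the second is in $L^1$ by Cauchy--Schwarz using $u\in L^2_{\frac{1}{\sigma}}$ and $Ay\in L^2_{\frac{1}{\sigma}}$. Hence $u\eta y'$ is absolutely continuous on $[0,1]$ and $L:=\lim_{x\to 0}(u\eta y')(x)$ exists. If $L\neq 0$, then from $u(0)=0$ and $u\in H^1(0,1)$ one has $|u(x)|\le C\sqrt{x}$, whence $|y'(x)|\ge c/\sqrt{x}$ near $0$, contradicting $y'\in L^2$. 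Boundedness of $\eta$ from above and below then yields $\lim_{x\to 0}u(x)y'(x)=0$.

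\textbf{Parts 3 and 4.} Using $\eta'/\eta=b/a$, a direct calculation gives the key identity
\[
\bigl(x\eta(u')^2\bigr)'=\eta(u')^2\Bigl(1-\tfrac{xb}{a}\Bigr)+2xu'(\eta u')'.
\]
In Part~3 ($K\le 1$) the boundedness of $xb/a$ follows from $x^K/a\in L^\infty(0,1]$ (obtained from Hypothesis~\ref{Ass0} as in Remark~\ref{Rem2}) and the continuity of $b$, while in Part~4 it is assumed. In both cases the first summand is in $L^1$ by $u'\in L^2$, and the second is in $L^1$ by Part~2. Therefore $x\eta(u')^2$ admits a finite limit $L\ge 0$ at $0$; if $L>0$ then $(u')^2\ge L/(2\|\eta\|_\infty x)$ near $0$, contradicting $u'\in L^2$. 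Hence $L=0$, and since $\eta\ge\eta_0>0$ also $x(u'(x))^2\to 0$.

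\textbf{Part 5 and main obstacle.} If $K<1$ then $x/a=x^{1-K}(x^K/a)\to 0$ by \eqref{limite} while $u$ is continuous, so $(x/a)u^2\to 0$. If $K\ge 1$ then $1/a\notin L^1$ near $0$, and combined with $u\in L^2_{\frac{1}{\sigma}}$ and the continuity of $u$, this forces $u(0)=0$ (otherwise $\int_0^\delta u^2\eta/a\,dx=+\infty$). Then $|u(x)|^2\le x\int_0^x(u'(s))^2\,ds$, so $(x/a)u^2(x)\le(x^2/a)\int_0^x(u')^2\,ds\to 0$ since $x^2/a$ is bounded (as in Part~2) and $\int_0^x(u')^2\,ds\to 0$. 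The main obstacle, in my view, is spotting the identity for $(x\eta(u')^2)'$ in Parts~3--4; once it is found, everything reduces to Part~2 and an elementary $L^2$-contradiction, and the other parts are essentially Cauchy--Schwarz plus the observation that $u(0)=0$ is either given or automatic in the strongly degenerate regime.
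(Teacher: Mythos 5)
Your treatment of Parts 1--4 is essentially the paper's own proof: the same scheme (show the relevant product $u\eta y'$, resp.\ $x\eta(u')^2$, lies in $W^{1,1}(0,1)$ by writing out its derivative, use Part 2 via the same Cauchy--Schwarz factorization $xu'(\eta u')'=\frac{xu'}{\sqrt\sigma}\cdot\sqrt\sigma(\eta u')'$, deduce existence of the limit at $0$, and exclude a nonzero limit because it would force $|y'|\gtrsim x^{-1/2}$ or $(u')^2\gtrsim x^{-1}$ near $0$, contradicting square integrability); your identity $(x\eta(u')^2)'=\eta(u')^2(1-\tfrac{xb}{a})+2xu'(\eta u')'$ is exactly the paper's $z'=2xu'(\eta u')'-x\eta'(u')^2+\eta(u')^2$ after substituting $\eta'=\eta b/a$. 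No issues there.

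Part 5 is where you deviate, and there is a genuine gap in your case $K\ge 1$. You claim that $K\ge 1$ forces $\frac1a\notin L^1$ near $0$, but Hypothesis \ref{Ass0} only gives that $x^K/a$ is bounded near $0$, i.e.\ the \emph{upper} bound $\frac1a\lesssim x^{-K}$; it gives no lower bound on $\frac1a$. For instance $a(x)=\sqrt x$ satisfies Hypothesis \ref{Ass0} with $K=2$ (indeed $x^2/a=x^{3/2}$ is nondecreasing), yet $\frac1a\in L^1(0,1)$, so constants belong to $H^1_{\frac1\sigma}(0,1)$, $u(0)$ need not vanish, and your argument for that branch collapses (while the conclusion itself still holds, since $x/a\to0$ there). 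The implication you want is true in the (SD) setting, where $a\in C^1[0,1]$ and $a(0)=0$ give $a(x)\le Cx$, hence $\frac1a\notin L^1$ (this is the content of Remark \ref{Rem2}), but Part 5 is stated under Hypothesis \ref{Ass0} alone, where $a$ is merely continuous and the constant $K$ with the monotonicity property is not even unique. A clean repair is to split on integrability of $\frac1a$ instead of on $K$: if $\frac1a\notin L^1$ near $0$, then $u\in L^2_{\frac1\sigma}$ and continuity force $u(0)=0$ and your estimate $(x/a)u^2\le (x^2/a)\int_0^x(u')^2\,ds\to0$ applies; if $\frac1a\in L^1$, one checks, using the monotonicity of $x^K/a$ near $0$, that $x/a\to0$ (otherwise $\int_{x_n}^{2x_n}\frac{dx}{a}$ stays bounded below along a sequence $x_n\to0$, contradicting integrability), and then boundedness of $u$ concludes. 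For comparison, the paper proves Part 5 by the same $W^{1,1}$-plus-contradiction scheme as the other parts, applied to $z=\frac{x}{a}u^2$ (its computation uses $a'$ and the bound $x|a'|/a\le K$, so it too implicitly works in the (WD)/(SD) regime); your route is more elementary once the integrability dichotomy is stated correctly, but as written the $K\ge1$ branch is not justified under Hypothesis \ref{Ass0}.
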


The last result, which will be crucial to obtain the stabilization of problem \eqref{wave}, is given by the following proposition.
\begin{Proposition}\label{Prop2.2} Assume Hypothesis $\ref{Ass0}$  and for $\beta \ge 0$ define
\[|||z|||_1^2:= \int_0^1 \eta (z')^2dx + \beta z^2(1)
\] 
for all  $z \in H^1_{\frac{1}{\sigma},0}(0,1)$. Then the two norms $|||\cdot|||_1$ and $\|\cdot\|_1$ are equivalent. Moreover, for every $\lambda \in \R$, the variational problem
\begin{equation}\label{varionalproblem}
\int_0^1 \eta z'\phi' dx+ \beta z(1)\phi(1) = \lambda \phi(1) \quad \forall \; \phi \in H^1_{\frac{1}{\sigma},0}(0,1)
\end{equation}
admits a  unique solution $z \in H^1_{\frac{1}{\sigma},0}(0,1)$ which satisfies the estimates
\begin{equation}\label{02}
|||z|||_1^2\le \frac{\lambda^2}{\min_{[0,1]}\eta} \quad \text{and} \quad \|z\|^2_{{\frac{1}{\sigma}}}\le \frac{ \max_{[0,1]} \eta+ C_{HP}}{\min_{[0,1]}^2 \eta}\lambda^2,
\end{equation}
where $C_{HP}$ is the Hardy-Poincaré constant in Proposition $\ref{H1a}$.
In addition, $z \in H^2_{\frac{1}{\sigma},0}(0,1)$ and solves
\begin{equation}\label{VP}
\begin{cases}
&- \sigma (\eta z_x)_x=0,\\
&\eta z_x(t,1) +\beta z(t,1)=\lambda.
\end{cases}
\end{equation}
\end{Proposition}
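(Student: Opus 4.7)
The plan is to apply the Lax--Milgram theorem to the symmetric bilinear form $B(z,\phi):=\int_0^1 \eta z'\phi'\,dx + \beta z(1)\phi(1)$ on $H^1_{\frac{1}{\sigma},0}(0,1)$, and then to bootstrap regularity from the resulting weak solution.

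First I would settle the norm equivalence. Since $z(0)=0$, the fundamental theorem of calculus and Cauchy--Schwarz give $z(1)^2 \le \int_0^1 (z')^2\,dx = \|z\|_1^2$, whence $|||z|||_1^2 \le (\max_{[0,1]}\eta + \beta)\|z\|_1^2$. The reverse inequality $\|z\|_1^2 \le \frac{1}{\min_{[0,1]}\eta}\,|||z|||_1^2$ is immediate from the positivity and boundedness of $\eta$. Continuity of $B$ on $H^1_{\frac{1}{\sigma},0}(0,1)$ then follows from the boundedness of $\eta$ and the trace bound $|z(1)|\le\|z\|_1$; coercivity is the equivalence just proved combined with Corollary~\ref{equivalenze}. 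The functional $\phi\mapsto\lambda\phi(1)$ is continuous for the same reason. Lax--Milgram therefore provides a unique $z\in H^1_{\frac{1}{\sigma},0}(0,1)$ solving \eqref{varionalproblem}.

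For the estimates in \eqref{02}, I would take $\phi=z$ in \eqref{varionalproblem} to obtain
\[
|||z|||_1^2 = \lambda z(1) \le |\lambda|\,|z(1)| \le |\lambda|\,\|z\|_1 \le \frac{|\lambda|}{\sqrt{\min_{[0,1]}\eta}}\,|||z|||_1,
\]
which yields the first inequality. Chaining the Hardy--Poincar\'e inequality (Proposition~\ref{H1a}) with $\int_0^1 \eta(z')^2\,dx \le \max_{[0,1]}\eta\cdot\|z\|_1^2$ gives $\|z\|_{\frac{1}{\sigma}}^2 \le \|z\|_{1,\frac{1}{\sigma}}^2 \le (C_{HP}+\max_{[0,1]}\eta)\|z\|_1^2$, and combining with $\|z\|_1^2 \le \frac{\lambda^2}{\min^2_{[0,1]}\eta}$ produces the second estimate.

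Finally, to gain the $H^2_{\frac{1}{\sigma},0}$ regularity and recover \eqref{VP}, I would first test \eqref{varionalproblem} against arbitrary $\phi\in H^1_c(0,1)\subset H^1_{\frac{1}{\sigma},0}(0,1)$, for which $\phi(1)=0$; this gives $\int_0^1 \eta z'\phi'\,dx=0$, so $(\eta z')'=0$ distributionally on $(0,1)$. Hence $\eta z'$ is constant, $z'$ is bounded, and $Az=\sigma(\eta z')'\equiv 0\in L^2_{\frac{1}{\sigma}}(0,1)$, placing $z\in H^2_{\frac{1}{\sigma},0}(0,1)=D(A)$. Returning to \eqref{varionalproblem} with a general $\phi$ and invoking the Green-type identity \eqref{Green} of Lemma~\ref{intparti}, I would deduce $(\eta z')(1)\phi(1)+\beta z(1)\phi(1)=\lambda\phi(1)$ for every admissible $\phi$, which is exactly the Robin-type condition in \eqref{VP}. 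The main subtlety is ensuring that no boundary term at $x=0$ spoils the integration by parts; this is precisely what Lemma~\ref{intparti}, built on Lemma~\ref{lemmalimite}(1), takes care of.
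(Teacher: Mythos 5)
Your proposal is correct and follows essentially the same route as the paper: Lax--Milgram for the bilinear form, testing with $\phi=z$ to get \eqref{02}, testing with functions supported away from the endpoints plus the fundamental lemma of the calculus of variations to obtain $(\eta z')'=0$ and hence $z\in H^2_{\frac{1}{\sigma},0}(0,1)$, and finally an integration by parts (Green identity) to recover the Robin condition at $x=1$. The only slip is notational: with the paper's definition, $H^1_c(0,1)$ consists of functions vanishing near $0$ but not necessarily at $1$, so the interior step should be carried out with $\phi\in C_c^\infty(0,1)$, exactly as the paper does.
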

\begin{proof}
Observe that
\begin{equation}\label{*z}
|z(1)|= \left| \int_0^1 z'(t)dt \right| \le \|z\|_1,
\end{equation} 
for all  $z\in H^1_{\frac{1}{\sigma},0}(0,1)$. Thus, $|||\cdot|||_1$ and $\|\cdot\|_1$ are equivalent. Indeed,
for all $z \in H^1_{\frac{1}{\sigma},0}(0,1)$
\begin{equation}\label{04*}
\|z\|_1^2 \le \frac{1}{\min_{[0,1]} \eta} |||z|||_1^2;
\end{equation}
moreover, since $\beta z^2(1)\le \beta \|z\|_1^2$ by \eqref{*z}, one has
\[
|||z|||_1^2\le (\max_{[0,1]}\eta + \beta) \|z\|_1^2,
\]
and the claim holds.

Now, consider the bilinear and symmetric form  $\Lambda: H^1_{\frac{1}{\sigma},0}(0,1) \times H^1_{\frac{1}{\sigma},0}(0,1) \rightarrow \R$, given by
\[
\Lambda (z, \phi) := \int_0^1 \eta z' \phi' dx + \beta z(1)\phi(1).
\]
for all  $z,\phi \in H^1_{\frac{1}{\sigma},0}(0,1)$.
Clearly $\Lambda$ is also  coercive and continuous. Indeed, by Corollary \ref{equivalenze}
\[
\Lambda(z,z) = \int_0^1 \eta (z')^2dx + \beta z^2(1)\ge  \int_0^1 \eta (z')^2dx \ge \frac{\min_{[0,1]}\eta}{C_{HP}+\max_{[0,1]} \eta} \|z\|_{1,\frac{1}{\sigma}}^2.
\]
Moreover,
\[
|\Lambda (z, \phi)| \le\max_{[0,1]}\eta \|z'\|_{L^2(0,1)}\|\phi'\|_{L^2(0,1)}+ \beta |z(1)||\phi(1)|.
\]
By \eqref{*z} applied to $z$ and $\phi$, one has
\[
|\Lambda (z, \phi)| \le (\max_{[0,1]}\eta +\beta) \|z\|_1 \|\phi\|_1.
\]
Now, consider the linear functional
\[
\mathcal L( \phi):= \lambda \phi(1),
\]
with $\phi \in H^1_{\frac{1}{\sigma},0}(0,1)$. Clearly, $\mathcal L$ is continuous and linear. Thus, by the Lax-Milgram Theorem, there exists a unique solution $z \in H^1_{\frac{1}{\sigma},0}(0,1)$ of
\begin{equation}\label{05}
\Lambda (z, \phi)= \mathcal L (\phi)
\end{equation}
for all $\phi \in H^1_{\frac{1}{\sigma},0}(0,1)$.
In particular,
\begin{equation}\label{04}
\Lambda (z,z) =\int_0^1 \eta (z')^2dx+ \beta z^2(1) =  \mathcal L(z)= \lambda z(1).
\end{equation}
By \eqref{04} and \eqref{04*}, we have
\[
|||z|||_1^2 = \lambda z(1) \le \frac{|\lambda|}{\sqrt{\min_{[0,1]}\eta}} |||z|||_1;
\]
thus
\[
|||z|||_1\le \frac{|\lambda|}{\sqrt{\min_{[0,1]}\eta}} \quad \text{and} \quad |||z|||_1^2\le \frac{\lambda^2}{\min_{[0,1]}\eta}.
\]
Moreover, by Corollary \ref{equivalenze}, we know that in $ H^1_{\frac{1}{\sigma},0}(0,1)$ the two norms 
$
\|\cdot\|_1$ and $\|\cdot\|_{1, \frac{1}{\sigma}}$
are equivalent. Thus
\[
\begin{aligned}
|||z|||^2_1 &\ge \min_{[0,1]}\eta \|z\|_1^2 + \beta z^2(1) \ge  \min_{[0,1]} \eta\|z\|_1^2\\ & \ge \frac{ \min_{[0,1]} \eta}{ \max_{[0,1]} \eta+ C_{HP}}\|z\|_{1, \frac{1}{\sigma}}^2 \ge \frac{ \min_{[0,1]} \eta}{ \max_{[0,1]} \eta+ C_{HP}}\|z\|^2_{L^2_{\frac{1}{\sigma}}(0,1)}.
\end{aligned}
\]
Thus, by \eqref{02},
\[
\|z\|^2_{{\frac{1}{\sigma}}} \le \frac{ \max_{[0,1]} \eta+ C_{HP}}{ \min_{[0,1]} \eta}|||z|||^2_1 \le \frac{\max_{[0,1]} \eta+ C_{HP} }{\min_{[0,1]}^2 \eta}\lambda^2.
\]

Now, we will prove that $z \in H^2_{\frac{1}{\sigma},0}(0,1)$ solves \eqref{VP}. To this aim, we consider again \eqref{05}. Since it holds for every $\phi \in H^1_{\frac{1}{\sigma},0}(0,1)$, it holds in particular for every $\phi \in C_c^\infty(0,1)$, so that 
\[
\int_0^1 \eta z'\phi'=0 \mbox{ for all }\phi \in C_c^\infty(0,1).
\]
By the fundamental lemma of the calculus of variations (for instance, see \cite[Lemma 1.2.1]{JLJ}), we get that
$\eta z'$ is constant a.e. in $(0,1)$ and so $(\eta z')' =0$ a.e. in $(0,1)$; in particular 
\[
\sigma (\eta z')' =0 \quad \text{ a.e.  in} \; (0,1)
\]
and so
$Az= \sigma (\eta z')' \in L^2_{\frac{1}{\sigma}}(0,1)$.

Now, coming back to \eqref{05}, we have
\[
\int_0^1 \eta z'\phi' dx+ \beta z(1)\phi(1)=\lambda \phi(1) \Longleftrightarrow [\eta z'\phi]_{x=0}^{x=1} + \beta z(1)\phi(1) = \lambda \phi(1)
\]
for all $\phi \in H^1_{\frac{1}{\sigma},0}(0,1)$. Thus, since $\phi(0)=0$, we obtain
\[
(\eta z')(1) + \beta z(1)=\lambda,
\]
that is $z$ solves \eqref{VP}.
\end{proof}


We are now ready to study the well posedness of problem \eqref{wave}. For this, we introduce the Hilbert space
\[
\mathcal H_0 := H^1_{{\frac{1}{\sigma},0}}(0,1)\times L^2_{{\frac{1}{\sigma}}}(0,1),  
\]
with the inner product
\[
\langle (u, v), (\tilde u, \tilde v) \rangle_{\mathcal H_0}:=  \int_0^1 u'\tilde u'dx + \int_0^1 v\tilde v\frac{1}{\sigma}dx+ \beta u(1) \tilde u(1) 
\]
for every $(u, v), (\tilde u, \tilde v)  \in \mathcal H_0$, and the induced norm
\[
\|(u,v)\|_{\mathcal H_0}^2:=  \int_0^1 (u')^2dx + \int_0^1 v^2\frac{1}{\sigma}dx + \beta u^2(1).
\]
 Observe that if $u \in H^1_{{\frac{1}{\sigma},0}}(0,1)$, then $u$ is continuous, so that $u(1)$ is well defined. Moreover, being $\eta \in C^0[0,1] \cap C^1(0,1]$ far away from 0,  for every $(u, v), (\tilde u, \tilde v)  \in \mathcal H_0$, the norm $\|(u,v)\|_{\mathcal H_0}^2$ is equivalent to 
\[
\|(u,v)\|_1^2:=  \int_0^1\eta (u')^2dx + \int_0^1 v^2\frac{1}{\sigma}dx + \beta u^2(1).
\]
Obviously, to such a norm we associate the inner product
\[
\langle (u, v), (\tilde u, \tilde v) \rangle_1:=  \int_0^1 \eta u'\tilde u'dx + \int_0^1 v\tilde v\frac{1}{\sigma}dx+ \beta u(1) \tilde u(1),
\]
which we will use from now on, being more convenient for our treatment.

Now, consider the
matrix operator $\cA : D(\cA) \subset \mathcal H_0 \rightarrow \mathcal H_0$, given by
\[
\cA:= \begin{pmatrix} 0 & Id\\
A&0 \end{pmatrix},\]
and
\[D(\cA):= \{(u,v) \in H^2_{{\frac{1}{\sigma},0}}(0,1) \times H^1_{{\frac{1}{\sigma},0}}(0,1): (\eta u')(1) + v(1)+ \beta u(1)=0\}. \]
Thus, by using the operator $(\mathcal A, D(\mathcal A))$, we rewrite \eqref{wave} as a Cauchy problem. Indeed, setting, as usual,
\[
\cY(t):= \begin{pmatrix} y\\ y_t \end{pmatrix} \; \text{ and }\; \cY_0:= \begin{pmatrix}y_0\\y_1 \end{pmatrix},
\]
one has that  \eqref{wave} can be rewritten  
as 
\begin{equation}\label{CP}
\begin{cases}
\dot \cY (t)= \cA \cY (t), & t \ge 0,\\
\cY(0) = \cY_0.
\end{cases}
\end{equation}
If we prove that $(\cA, D(\cA))$  generates a contraction semigroup $(S(t))_{t \ge 0}$ and
 $\cY_0  \in \mathcal H_0$, then $\cY(t)= S(t)\cY_0$ gives the mild solution of \eqref{CP}. 
The next theorem holds.
\begin{Theorem}\label{generator}
Assume Hypothesis $\ref{Ass0}$. Then the operator $(\cA, D(\cA))$ is non positive with dense domain and generates a contraction semigroup  $(S(t))_{t \ge 0}$. 
\end{Theorem}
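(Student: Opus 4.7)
The strategy is to apply the Lumer--Phillips theorem on the Hilbert space $\mathcal H_0$ endowed with the equivalent inner product $\langle\cdot,\cdot\rangle_1$. I will verify in turn that $D(\cA)$ is dense in $\mathcal H_0$, that $\cA$ is dissipative, and that $I-\cA$ maps $D(\cA)$ onto $\mathcal H_0$; the generation of a contraction semigroup is then automatic.

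\textbf{Dissipativity.} Given $(u,v)\in D(\cA)$,
$$
\langle \cA(u,v),(u,v)\rangle_1=\int_0^1 \eta\, v' u'\,dx+\int_0^1 (Au)\,v\,\frac{1}{\sigma}\,dx+\beta v(1)u(1).
$$
The Green-type identity \eqref{Green} from Lemma \ref{intparti}, applied to the admissible pair $(u,v)$, rewrites the middle integral as $-\int_0^1 \eta\, u' v'\,dx+(\eta u' v)(1)$, so the two contributions involving $\int \eta u' v'\,dx$ cancel and
$$
\langle \cA(u,v),(u,v)\rangle_1=(\eta u')(1)\,v(1)+\beta u(1)v(1).
$$
Using the boundary relation $(\eta u')(1)+v(1)+\beta u(1)=0$ that defines $D(\cA)$, this collapses to $-v(1)^2\le 0$, and dissipativity follows.

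\textbf{Range condition.} Given $(f,g)\in\mathcal H_0$, I must produce $(u,v)\in D(\cA)$ with $u-v=f$ and $v-Au=g$. Writing $v=u-f$, the problem reduces to the elliptic equation $u-Au=f+g$ in $(0,1)$, together with $u(0)=0$ and the boundary relation $(\eta u')(1)+(1+\beta)u(1)=f(1)$. Its weak formulation, derived via \eqref{Green}, is
$$
\int_0^1 u\phi\,\frac{1}{\sigma}\,dx+\int_0^1 \eta u'\phi'\,dx+(1+\beta)u(1)\phi(1)=\int_0^1 (f+g)\phi\,\frac{1}{\sigma}\,dx+f(1)\phi(1)
$$
for every $\phi\in H^1_{\frac{1}{\sigma},0}(0,1)$. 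The bilinear form on the left is continuous and coercive on $H^1_{\frac{1}{\sigma},0}(0,1)$: continuity rests on \eqref{*z} and the boundedness of $\eta$, coercivity on Corollary \ref{equivalenze} together with the positive $L^2_{\frac{1}{\sigma}}$ term; the right-hand side is a bounded linear functional there. Lax--Milgram delivers a unique $u\in H^1_{\frac{1}{\sigma},0}(0,1)$. Testing with $\phi\in C_c^\infty(0,1)$ and arguing as in the closing steps of the proof of Proposition \ref{Prop2.2} yields $Au\in L^2_{\frac{1}{\sigma}}(0,1)$, hence $u\in H^2_{\frac{1}{\sigma},0}(0,1)$; a final integration by parts in the variational identity recovers the boundary condition at $x=1$, so $(u,u-f)\in D(\cA)$.

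\textbf{Density and expected difficulty.} Density of $D(\cA)$ in $\mathcal H_0$ is obtained in the usual way: first approximate an element of $\mathcal H_0$ in both components by compactly supported functions through the cut-off $\xi_n$ of Lemma \ref{intparti}, and then perturb the smooth approximants near $x=1$ to fulfil the boundary compatibility $(\eta u')(1)+v(1)+\beta u(1)=0$. The technical heart of the argument is the range step, where one must ensure that every integration by parts involving the operator $A$ is admissible despite the degeneracy of $\sigma$ at $x=0$; this is precisely what parts (1) and (5) of Lemma \ref{lemmalimite} provide, by killing any boundary contribution that could appear at the degenerate endpoint.
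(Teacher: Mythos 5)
Your two substantive steps coincide with the paper's proof: the dissipativity computation (Green formula of Lemma \ref{intparti} plus the boundary relation defining $D(\cA)$, giving $\langle\cA(u,v),(u,v)\rangle_1=-v(1)^2$) and the range condition for $I-\cA$ (reduction to $u-Au=f+g$ with $(\eta u')(1)+(1+\beta)u(1)=f(1)$, Lax--Milgram for the same bilinear form, interior regularity by testing with $C_c^\infty(0,1)$, and recovery of the boundary condition) are exactly the arguments in the paper. The genuine divergence, and the gap, is the density step. The paper never proves density directly: it invokes Theorem \ref{densità} (Engel--Nagel, Corollary 3.20), by which a dissipative operator on a reflexive Banach space with $\lambda I-\cA$ surjective for some $\lambda>0$ is \emph{automatically} densely defined and generates a contraction semigroup; density is a conclusion, not a hypothesis to check.

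Your plan, based on the classical Lumer--Phillips theorem, does require an independent proof that $D(\cA)$ is dense in $\mathcal H_0$, and the sketch you give would not work as stated. Elements of $D(\cA)$ have first component in $H^2_{\frac{1}{\sigma},0}(0,1)$, whereas multiplying an $H^1_{\frac{1}{\sigma},0}(0,1)$ function by the piecewise linear cut-off $\xi_n$ of Lemma \ref{intparti} yields only an $H^1$ function (its derivative jumps), so your approximants do not lie in the domain; one would need an additional smoothing argument producing $H^2_{\frac{1}{\sigma},0}$ approximants in the $\|\cdot\|_1$ norm, and then a quantitative correction enforcing $(\eta u')(1)+v(1)+\beta u(1)=0$ that is small in $\mathcal H_0$ even though the constraint involves $(\eta u')(1)$, a quantity not controlled by the $\mathcal H_0$ norm (the natural fix is to adjust the \emph{second} component by a thin spike near $x=1$, small in $L^2_{\frac{1}{\sigma}}(0,1)$, but none of this is in your proposal). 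The simplest repair is to replace the density claim by the reflexive-space generation theorem the paper cites, after which your dissipativity and surjectivity arguments complete the proof verbatim; also note that in the range step the relevant integration-by-parts tool is the Green identity \eqref{Green} itself rather than parts (1) and (5) of Lemma \ref{lemmalimite}.
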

For the proof of this theorem we use the next result
\begin{Theorem}[\cite{nagel}, Corollary 3.20]\label{densità}
Let $(\mathcal A,D(\mathcal A))$ be a dissipative operator on a reflexive Banach space such that $\lambda I-\mathcal A$ is surjective for some $\lambda >0$. Then $\mathcal A$ is densely defined and generates a contraction semigroup.
\end{Theorem}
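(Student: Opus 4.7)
The strategy is to apply Theorem \ref{densità} to the Hilbert (hence reflexive) space $\mathcal{H}_0$: it suffices to check that $\cA$ is dissipative and that the range of $\lambda I-\cA$ is all of $\mathcal{H}_0$ for some $\lambda>0$. Dissipativity is immediate: for $(u,v)\in D(\cA)$,
\[
\langle\cA(u,v),(u,v)\rangle_1=\int_0^1\eta v'u'\,dx+\int_0^1 (Au)v\,\frac{1}{\sigma}\,dx+\beta v(1)u(1),
\]
and applying Green's formula from Lemma \ref{intparti} to the middle term, the two $\eta u'v'$ integrals cancel and there remains $v(1)\bigl[(\eta u')(1)+\beta u(1)\bigr]$. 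By the boundary condition in $D(\cA)$, $(\eta u')(1)+\beta u(1)=-v(1)$, so $\langle\cA(u,v),(u,v)\rangle_1=-v^2(1)\le 0$, which also gives the non-positivity asserted in the statement.

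For the range condition, I fix $\lambda>0$ and $(f,g)\in\mathcal{H}_0$ and look for $(u,v)\in D(\cA)$ with $(\lambda I-\cA)(u,v)=(f,g)$. The second component forces $v=\lambda u-f$, which reduces the problem to finding $u\in H^2_{\frac{1}{\sigma},0}(0,1)$ satisfying $\lambda^2 u-Au=\lambda f+g$ together with the Robin-type condition $(\eta u')(1)+(\beta+\lambda)u(1)=f(1)$ obtained by substituting $v=\lambda u-f$ in the $D(\cA)$ boundary condition. I would attack this weakly on $H^1_{\frac{1}{\sigma},0}(0,1)$ using the bilinear form
\[
B(u,\phi):=\lambda^2\int_0^1 u\phi\,\frac{1}{\sigma}\,dx+\int_0^1\eta u'\phi'\,dx+(\beta+\lambda)u(1)\phi(1)
\]
and the linear functional $L(\phi):=f(1)\phi(1)+\int_0^1(\lambda f+g)\phi\,\frac{1}{\sigma}\,dx$. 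Continuity of $B$ and $L$ follows from the boundedness of $\eta$ and from the trace bound $|\phi(1)|\le\|\phi\|_1$ already exploited in Proposition \ref{Prop2.2}, while coercivity is granted by the $\lambda^2$ and $\eta(u')^2$ terms together with Corollary \ref{equivalenze}. Lax--Milgram then produces a unique $u\in H^1_{\frac{1}{\sigma},0}(0,1)$ satisfying $B(u,\phi)=L(\phi)$ for every test function.

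It remains to upgrade $u$ to $D(A)$ and to recover the boundary equality. Testing $B(u,\phi)=L(\phi)$ against $\phi\in C_c^\infty(0,1)$ yields $Au=\lambda^2 u-\lambda f-g$ in the distributional sense, whose right-hand side lies in $L^2_{\frac{1}{\sigma}}(0,1)$; hence $Au\in L^2_{\frac{1}{\sigma}}(0,1)$ and $u\in D(A)$. Applying Green's formula from Lemma \ref{intparti} in $B(u,\phi)=L(\phi)$ for general $\phi\in H^1_{\frac{1}{\sigma},0}(0,1)$ leaves exactly $\bigl[(\eta u')(1)+(\beta+\lambda)u(1)-f(1)\bigr]\phi(1)=0$, so the required boundary equality holds; setting $v:=\lambda u-f\in H^1_{\frac{1}{\sigma},0}(0,1)$ completes the construction of $(u,v)\in D(\cA)$. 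The only genuinely delicate point in the program is making sure the trace $(\eta u')(1)$ is well defined and consistent with the weighted spaces, but this is already guaranteed by the earlier results (the boundedness of $\eta$ away from $0$ and Lemma \ref{intparti}). Theorem \ref{densità} then yields density of $D(\cA)$ and generation of the contraction semigroup $(S(t))_{t\ge 0}$.
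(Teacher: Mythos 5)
Your proposal does not prove the statement it was assigned to. Theorem \ref{densità} is an \emph{abstract} operator-theoretic result quoted from \cite{nagel} (Corollary II.3.20): for \emph{any} dissipative operator on \emph{any} reflexive Banach space, the range condition for a single $\lambda>0$ already forces dense definedness and generation of a contraction semigroup. The paper gives no proof of this -- it is a citation -- and neither do you. What you actually establish is Theorem \ref{generator}: you verify, for the concrete operator $\mathcal A$ on $\mathcal H_0$, dissipativity (via Lemma \ref{intparti} and the boundary condition in $D(\mathcal A)$) and surjectivity of $\lambda I-\mathcal A$ (via Lax--Milgram), and then in your closing sentence you \emph{invoke} Theorem \ref{densità} to conclude. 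As a proof of Theorem \ref{densità} this is circular: the theorem to be proved appears as the last step of the argument. As a proof of Theorem \ref{generator} it is correct and essentially identical to the paper's own (the paper fixes $\lambda=1$, you keep a general $\lambda>0$; the bilinear form, the trace bound $|\phi(1)|\le\|\phi\|_1$, and the bootstrap from the weak formulation to $u\in D(A)$ plus the Robin boundary identity are all the same).

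A genuine proof of the quoted theorem must work at the abstract level and contains two ingredients absent from your text. First, surjectivity of $\lambda_0 I-\mathcal A$ for one $\lambda_0>0$ must be upgraded to all $\lambda>0$: dissipativity gives $\|(\lambda I-\mathcal A)x\|\ge\lambda\|x\|$, so $R(\lambda_0,\mathcal A)$ exists with norm at most $1/\lambda_0$, and a Neumann-series plus connectedness argument shows that the set of $\lambda>0$ belonging to $\rho(\mathcal A)$ is open and closed in $(0,\infty)$. Second -- and this is the only place reflexivity enters, a hypothesis your argument never uses -- one must prove $\overline{D(\mathcal A)}=X$: for $x\in X$ the vectors $x_n:=nR(n,\mathcal A)x\in D(\mathcal A)$ satisfy $\|x_n\|\le\|x\|$, so by reflexivity a subsequence converges weakly; one checks that the weak limit is $x$, and since $D(\mathcal A)$ is convex its weak and norm closures coincide, whence $x\in\overline{D(\mathcal A)}$. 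Only then does the Lumer--Phillips theorem yield the contraction semigroup. None of this can be extracted from a Lax--Milgram computation for the specific degenerate wave operator.
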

\begin{proof}[Proof of Theorem $\ref{generator}$]
According to the previous theorem, it is sufficient to prove that $ \mathcal A:D(\mathcal A)\to \mathcal H_0$ is dissipative and that $I-\mathcal A$ is surjective.

\underline{$ \mathcal A$ is dissipative:} take $(u,v) \in D(\mathcal A)$. Then $(u,v) \in H^2_{{\frac{1}{\sigma},0}}(0,1) \times H^1_{{\frac{1}{\sigma},0}}(0,1)$ and so \eqref{Green} holds. Hence, by Lemma \ref{intparti},
\[
\begin{aligned}
\langle \mathcal A (u,v), (u,v) \rangle_1 &=\langle (v, Au), (u,v) \rangle _1\\
&=\int_0^1 \eta u'v'dx+ \int_0^1 vAu\frac{1}{\sigma}dx + \beta v(1)u(1)
\\&
=\int_0^1\eta u'v'dx -\int_0^1\eta u'v'dx+ (\eta u' v)(1) + \beta v(1)u(1)\\
&= v(1)((\eta u')(1)+ \beta u(1)) \\
&= - v^2(1) \le 0.
\end{aligned}
\]
\underline{$I - \mathcal A$ is surjective:} 
take  $(f,g) \in \mathcal H_0=H^1_{{\frac{1}{\sigma},0}}(0,1)\times L^2_{{\frac{1}{\sigma}}}(0,1)$. We have to prove that there exists $(u,v) \in D(\mathcal A)$ such that
\begin{equation}\label{4.3'}
 ( I-\mathcal A)\begin{pmatrix} u\\
v\end{pmatrix} = \begin{pmatrix}f\\
g \end{pmatrix} \Longleftrightarrow  \begin{cases} v= u -f,\\
-Au + u= f+ g.\end{cases}
\end{equation}
Thus, define $F: H^1_{{\frac{1}{\sigma},0}}(0,1) \rightarrow \R$ as
\[
F(z)=\int_0^1(f+g) z\frac{1}{\sigma}  dx +  z(1)f(1).
\]
Obviously, $F\in H^{-1}_{{\frac{1}{\sigma},0}}(0,1)$, the dual space of $H^1_{{\frac{1}{\sigma},0}}(0,1)$ with respect to the pivot space $L^2_{{\frac{1}{\sigma}}}(0,1)$: indeed,  $f\in H^1_{{\frac{1}{\sigma},0}}(0,1)$, and so also $f(1)$ is well defined, and $g\in L^2_{{\frac{1}{\sigma}}}(0,1)$. Now, introduce the bilinear form $L:H^1_{{\frac{1}{\sigma},0}}(0,1)\times H^1_{{\frac{1}{\sigma},0}}(0,1)\to \R$ given by
\[
L(u,z):=  \int_0^1 u z \frac{1}{\sigma} dx + \int_0^1\eta u'z'dx +(\beta+1) u(1)z(1)
\]
for all $u, z \in H^1_{{\frac{1}{\sigma},0}}(0,1)$. Clearly, since $\beta \ge 0$, $L(u,z)$ is coercive. Moreover $L(u,z)$ is 
 continuous: indeed, for all $u \in H^1_{{\frac{1}{\sigma},0}}(0,1)$, as in \eqref{*z},
\[
|u(1)|\le \int_0^1| u'(t)|dt  = \|u'\|_{L^1(0,1)} \le \|u'\|_{L^2(0,1)};
\]
thus, for all $u, z \in H^1_{{\frac{1}{\sigma},0}}(0,1)$,
\[
|L(u,z)| \le  \|u\|_{ L^2_{\frac{1}{\sigma}} (0,1) }\|z\|_ {L^2_{\frac{1}{\sigma}} (0,1) } +(\|\eta\|_{L^\infty(0,1)} + \beta +1)\|u'\|_{L^2(0,1)}\|z'\|_ {L^2(0,1)},
\]
and the conclusion follows from Corollary \ref{equivalenze}.

As a consequence, by the Lax-Milgram Theorem, there exists a unique solution $u \in H^1_{{\frac{1}{\sigma},0}}(0,1)$ of
\[
L(u,z)= F(z)  \mbox{ for all }z\in H^1_{{\frac{1}{\sigma},0}}(0,1),\]
namely
\begin{equation}\label{4.4}
\int_0^1 u z \frac{1}{\sigma} dx + \int_0^1 \eta u'z'dx +(\beta +1)u(1)z(1)= \int_0^1(f+g) z \frac{1}{\sigma} dx +  z(1)f(1)
\end{equation}
for all $z \in H^1_{{\frac{1}{\sigma},0}}(0,1)$.

Now, take $v:= u-f$; then $v \in H^1_{{\frac{1}{\sigma},0}}(0,1)$.
We will prove that $(u,v) \in D(\mathcal A)$ and solves \eqref{4.3'}. To begin with, \eqref{4.4} holds for every $z \in C_c^\infty(0,1).$ Thus we have
\[
\int_0^1 \eta u'z'dx = \int_0^1(f+g-u) z \frac{1}{\sigma} dx 
\]
 for every $z \in C_c^\infty(0,1).$ Hence $\ds-(\eta u')'= (f+g- u)  \frac{1}{\sigma}$ a.e. in $(0,1)$. This implies  that $-\sigma(\eta u')'=( f+g- u) \in L^2_{ \frac{1}{\sigma}}(0,1)$, i.e. $Au \in L^2_{ \frac{1}{\sigma}}(0,1)$; thus $u \in D(A)$. 
 Moreover, coming back to \eqref{4.4} and thanks to \eqref{Green},
 \[
 - \int_0^1 \sigma(\eta u')'z \frac{1}{\sigma} dx + (\eta u' z)(1) +(\beta+1) u(1)z(1)= \int_0^1(f+g- u) z \frac{1}{\sigma} dx + z(1)f(1).
 \]
Using the fact that $-\sigma(\eta u')'= (f+g- u) $ a.e. in $(0,1)$, we obtain
\[
(\eta u' z)(1) +(\beta +1)u(1)z(1)=  z(1)f(1) 
\]
 for all $z \in H^1_{{\frac{1}{\sigma},0}}(0,1)$. Hence 
$
\eta (1) u' (1) +(\beta+1) u(1) - f(1)=0 .
$
Recalling that $v = u - f$, one has
\[
\eta (1) u' (1) +\beta u(1) + v(1)=0.
\]
In conclusion,  $(u,v) \in D(\mathcal A)$, $ u -A u =f+g$ and $v=u-f$, i.e. $(u,v)$ solves \eqref{4.3'}.
\end{proof}

As usual in semigroup theory, the mild solution of \eqref{CP} obtained above can be more regular: if $\cY_0 \in D(\mathcal A)$, then the solution is classical, in the sense that $\cY \in  C^1([0, +\infty); \mathcal H_0) \cap C([0, +\infty);D(\mathcal A))
$ and the equation in \eqref{wave} holds for all $t \ge0$. Hence, as in \cite[Corollary 4.2]{alabau} or in \cite[Proposition 3.15]{daprato}, one has the following theorem.
\begin{Theorem}\label{esistenza}
Assume Hypothesis $\ref{Ass0}$.

If $(y_0,y_1) \in \mathcal H_0$, then there exists a unique mild solution
\[
y \in C^1([0, +\infty); L^2_{\frac{1}{\sigma}}(0,1)) \cap C([0, +\infty); H^1_{\frac{1}{\sigma},0} (0,1))
\]
of \eqref{wave} which depends continuously on the initial data $(y_0,y_1) \in  \mathcal H_0.$
Moreover, if  $(y_0,y_1) \in D(\mathcal A)$, then the solution $y$ is classical, in the sense that
\[
y \in C^2([0, +\infty); L^2_{\frac{1}{\sigma}}(0,1)) \cap C^1([0, +\infty); H^1_{\frac{1}{\sigma},0} (0,1)) \cap C([0, +\infty); H^2_{\frac{1}{\sigma},0} (0,1))
\]
and the equation of \eqref{wave} holds for all $t \ge 0$. 
\end{Theorem}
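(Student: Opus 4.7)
The plan is to apply the abstract linear semigroup machinery already set up. By Theorem \ref{generator}, $(\mathcal{A}, D(\mathcal{A}))$ generates a $C_0$-semigroup of contractions $(S(t))_{t\ge 0}$ on $\mathcal H_0$, so for any $\mathcal{Y}_0 = (y_0, y_1) \in \mathcal H_0$ the formula $\mathcal{Y}(t) := S(t)\mathcal{Y}_0$ gives the unique mild solution of \eqref{CP}, with $\mathcal{Y} \in C([0,+\infty); \mathcal H_0)$; contractivity of $S(t)$ immediately yields $\|\mathcal{Y}(t)\|_{\mathcal H_0} \le \|\mathcal{Y}_0\|_{\mathcal H_0}$, hence continuous dependence on the initial data. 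Writing $\mathcal{Y}(t) = (y(t), v(t))$ and reading off components gives $y \in C([0,+\infty); H^1_{\frac{1}{\sigma},0}(0,1))$ and $v \in C([0,+\infty); L^2_{\frac{1}{\sigma}}(0,1))$.

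To upgrade this to $y \in C^1([0,+\infty); L^2_{\frac{1}{\sigma}}(0,1))$ with $y_t = v$, I would use the density of $D(\mathcal{A})$ in $\mathcal H_0$: approximate $\mathcal{Y}_0$ by $\mathcal{Y}_0^n \in D(\mathcal{A})$, apply classical semigroup theory to obtain $\dot{\mathcal{Y}}^n = \mathcal{A}\mathcal{Y}^n$, whose first component is $(y^n)_t = v^n$, and pass to the limit in $C([0,T]; \mathcal H_0)$ for each $T>0$ to transfer the identity $y_t = v$ to the mild solution.

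For the classical case, if $\mathcal{Y}_0 \in D(\mathcal{A})$ the Hille--Yosida theorem gives $\mathcal{Y} \in C^1([0,+\infty); \mathcal H_0) \cap C([0,+\infty); D(\mathcal{A}))$ together with $\dot{\mathcal{Y}}(t) = \mathcal{A}\mathcal{Y}(t)$ for every $t \ge 0$. Reading componentwise, membership in $D(\mathcal{A})$ delivers $y \in C([0,+\infty); H^2_{\frac{1}{\sigma},0}(0,1))$ and $v = y_t \in C([0,+\infty); H^1_{\frac{1}{\sigma},0}(0,1))$, i.e.\ $y \in C^1([0,+\infty); H^1_{\frac{1}{\sigma},0}(0,1))$; the $C^1$-regularity of $\mathcal{Y}$ in $\mathcal H_0$ combined with $\dot{\mathcal{Y}} = (y_t, y_{tt})$ forces $y_{tt} \in C([0,+\infty); L^2_{\frac{1}{\sigma}}(0,1))$, i.e.\ $y \in C^2([0,+\infty); L^2_{\frac{1}{\sigma}}(0,1))$; and the identity $\dot{\mathcal{Y}} = \mathcal{A}\mathcal{Y}$ is precisely $y_{tt} = Ay = a y_{xx} + b y_x$. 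The boundary conditions $y(t,0) = 0$ and $\eta(1) y_x(t,1) + y_t(t,1) + \beta y(t,1) = 0$ are encoded in $\mathcal{Y}(t) \in D(\mathcal{A})$.

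There is no genuine analytic obstacle: the nontrivial work (dissipativity of $\mathcal{A}$ and surjectivity of $I-\mathcal{A}$) is already contained in Theorem \ref{generator}, and the remaining regularity statements are the standard output of the references \cite{alabau, daprato} cited in the statement. The only point of care is the identification $v = y_t$ in the mild case, which the density argument above handles.
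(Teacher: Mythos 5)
Your proposal is correct and follows essentially the same route as the paper, which likewise derives Theorem \ref{esistenza} directly from the contraction semigroup of Theorem \ref{generator} together with standard linear semigroup theory (citing \cite[Corollary 4.2]{alabau} and \cite[Proposition 3.15]{daprato}). The extra detail you supply on identifying $y_t=v$ by approximating with data in $D(\mathcal A)$ is exactly the standard argument implicit in those references.
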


\section{The stability result}\label{section3}
In this section we prove the main result of the paper when $a$ is (WD) or (SD). Actually, we will prove that the energy associated to the initial problem is nonincreasing and, in particular, it decreases exponentially under suitable assumptions.

To this aim, let $y$ be a mild solution of \eqref{wave} and consider its energy, given by 
\begin{equation}\label{def_energy}
E_y(t)=\frac{1}{2}\left[\int_0^1 \left(\frac{1}{\sigma}y_t^2(t,x)  +\eta y_x^2(t,x)\right)dx + \beta y^2(t,1)\right], \quad \; t \ge 0.
\end{equation}
With this definition in hand,  one can prove that the energy  is nonincreasing.
\begin{Theorem}\label{Energiadecrescente}
Assume Hypothesis $\ref{Ass0}$ and let $y$ be a classical solution of \eqref{wave} (for instance, if $(y_0,y_1) \in D(\mathcal A)$). Then 
the energy is nonincreasing and
\[
\frac{dE_y(t)}{dt}=-y_t(t,1)^2, \quad t\geq0.
\]
\end{Theorem}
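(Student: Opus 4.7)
\medskip

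\noindent\textbf{Proof plan.} The plan is a direct energy computation: differentiate $E_y(t)$ in time, rewrite the operator $A$ in the form $\sigma(\eta \cdot)'$, integrate by parts and use the boundary conditions to cancel all interior terms, leaving only the dissipation at $x=1$.

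First, since $y$ is classical in the sense of Theorem \ref{esistenza}, we have enough regularity ($y\in C^2([0,\infty);L^2_{\frac{1}{\sigma}}(0,1))\cap C^1([0,\infty);H^1_{\frac{1}{\sigma},0}(0,1))\cap C([0,\infty);H^2_{\frac{1}{\sigma},0}(0,1))$) to differentiate under the integral sign in \eqref{def_energy} and obtain
\[
\frac{dE_y}{dt}=\int_0^1\frac{1}{\sigma}y_t y_{tt}\,dx+\int_0^1 \eta\, y_x y_{xt}\,dx+\beta y(t,1)y_t(t,1).
\]
Using the equation $y_{tt}=ay_{xx}+by_x=Ay$ and the identity $Ay=\sigma(\eta y_x)_x$ from \eqref{defsigma}, the first integral becomes $\int_0^1 y_t(\eta y_x)_x\,dx$.

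The heart of the computation is then the integration by parts
\[
\int_0^1 y_t(\eta y_x)_x\,dx=\bigl[\eta y_x y_t\bigr]_0^1-\int_0^1 \eta\, y_x y_{xt}\,dx,
\]
which cancels the second interior integral. The boundary term at $x=0$ vanishes: indeed, for fixed $t\ge 0$, $y(t,\cdot)\in H^2_{\frac{1}{\sigma},0}(0,1)\subset H^2_{\frac{1}{\sigma}}(0,1)$ and $y_t(t,\cdot)\in H^1_{\frac{1}{\sigma},0}(0,1)$, so part 1 of Lemma \ref{lemmalimite} yields $\lim_{x\to 0} y_t(t,x)y_x(t,x)=0$, and continuity/boundedness of $\eta$ near $0$ does the rest. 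This is the one step where the degenerate framework matters; the preparatory work in Section \ref{section2} was designed exactly to make this limit harmless.

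Collecting everything,
\[
\frac{dE_y}{dt}= y_t(t,1)\bigl(\eta(1)y_x(t,1)+\beta y(t,1)\bigr).
\]
Finally, the boundary condition at $x=1$ in \eqref{wave}, namely $y_t(t,1)+\eta(1)y_x(t,1)+\beta y(t,1)=0$, gives $\eta(1)y_x(t,1)+\beta y(t,1)=-y_t(t,1)$, so that $dE_y/dt=-y_t^2(t,1)\le 0$, as claimed. I expect no substantive obstacle beyond the vanishing of the boundary trace at $0$, which is precisely what Lemma \ref{lemmalimite}(1) provides.
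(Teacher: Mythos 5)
Your proposal is correct and follows essentially the same route as the paper: the paper multiplies the equation by $y_t/\sigma$, integrates by parts using $Ay=\sigma(\eta y_x)_x$, kills the boundary term at $x=0$ via Lemma \ref{lemmalimite}(1) with $u=y_t$, and uses the damping condition at $x=1$, exactly as you do. The only difference is cosmetic (you differentiate $E_y$ directly rather than starting from the multiplied equation), and your bookkeeping of the factor $\tfrac12$ is in fact cleaner than the paper's displayed computation.
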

\begin{proof}
By multiplying the equation by $\ds\frac{y_t}{\sigma}$, integrating over $(0,1)$ and using the boundary conditions one has
\[
\begin{aligned}
0&=\frac{1}{2} \int_0^1\frac{d}{dt} \left(\frac{y^2_t}{\sigma}\right)dx -[\eta y_xy_t]_{x=0}^{x=1} +\int_0^1 \eta y_xy_{tx}dx\\
&=\frac{1}{2}\frac{d}{dt}\left[\int_0^1	\left( \frac{y^2_t}{\sigma}+\eta y^2_x\right)dx + \beta y^2(t,1)  \right] + y^2_t(t,1)\\
&=\frac{1}{2}\frac{d}{dt} E_y(t)+ y^2_t(t,1).
\end{aligned}
\]
Indeed, taking $u=y_t$ in Lemma \ref{lemmalimite}.1, one has $\lim_{x\rightarrow 0} (\eta y_xy_t)(t,x)=0$.
Hence,
\[
\frac{1}{2}\frac{d}{dt} E_y(t) = -  y^2_t(t,1)\le 0
\]
for all $t \ge 0$.
\end{proof}

Now,  our aim is to estimate the energy $E_y(t)$ with the value of the energy $E_y(0)$ at $t=0$. To do that, we need to restrict Hypothesis \ref{Ass0}, requiring the crucial assumption below.

\begin{Assumptions}\label{Ass1}
Hypothesis \ref{basic} holds and $a$ is (WD) or (SD); if $K>1$, then also assume that $\ds\frac{xb}{a} \in L^\infty(0,1).$
\end{Assumptions}

\begin{Remark}
It is clear that, by definition of (WD) or (SD), if Hypothesis \ref{Ass1} holds, then Hypothesis \ref{Ass0} holds, as well.
\end{Remark}

The next preliminary result holds.
\begin{Proposition}\label{Prop1}
 Assume Hypothesis $\ref{Ass1}$ and let $y$ be a classical solution of \eqref{wave}. Then 
\begin{equation}\label{1new}
\begin{aligned}
0&= 2\int_0^1\left[\frac{xy_xy_t}{\sigma}\right]_{t=s}^{t=T} dx  -\frac{1}{\sigma(1)}\int_s^T y_t^2(t,1)dt - \eta(1) \int_s^T  y_x^2(t,1)dt\\&- \int_{Q_s} x \eta \frac{b}{a}  y_x^2 dxdt + \int_{Q_s}\left(1-\frac{x(a'-b)}{a}\right) \frac{1}{\sigma}y_t^2 dxdt+ \int_{Q_s} \eta y_x^2 dxdt,
\end{aligned}
\end{equation}
for every $T>s>0$.
\end{Proposition}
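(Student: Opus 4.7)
The plan is to employ the classical multiplier method, adapted to the degenerate non-divergence setting. I would multiply the equation $y_{tt}=ay_{xx}+by_x$ by $\frac{xy_x}{\sigma}$ and integrate over $Q_s:=(s,T)\times(0,1)$. Using $a/\sigma=\eta$ and $b/\sigma=\eta b/a$, the right-hand side becomes $\int_{Q_s}\eta\, x y_x y_{xx}\,dxdt+\int_{Q_s}\eta\frac{b}{a}x y_x^2\,dxdt$, and the identity will follow by handling each side via integration by parts and then collecting terms.

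On the left, I would first integrate by parts in $t$, obtaining the time-boundary contribution $\int_0^1\bigl[\frac{xy_xy_t}{\sigma}\bigr]_{t=s}^{t=T}dx$ and the bulk term $-\frac{1}{2}\int_{Q_s}\frac{x}{\sigma}\partial_x(y_t^2)\,dxdt$. Integrating the latter by parts in $x$ and using the key identity
$\Bigl(\tfrac{x}{\sigma}\Bigr)'=\tfrac{1}{\sigma}\Bigl(1-\tfrac{x(a'-b)}{a}\Bigr),$
which follows from $\sigma=a/\eta$ and $\eta'/\eta=b/a$, produces the interior term $\frac{1}{2}\int_{Q_s}\frac{1}{\sigma}\bigl(1-\frac{x(a'-b)}{a}\bigr)y_t^2\,dxdt$ together with boundary evaluations at $x=0,1$. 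On the right, I would rewrite $\eta x y_x y_{xx}=\frac{1}{2}\eta x\partial_x(y_x^2)$ and integrate by parts in $x$; since $(\eta x)'=\eta(1+xb/a)$, one picks up $-\frac{1}{2}\int_{Q_s}\eta y_x^2\,dxdt-\frac{1}{2}\int_{Q_s}x\eta\frac{b}{a}y_x^2\,dxdt$ plus boundary evaluations. Combining with the remaining $\int_{Q_s}\eta\frac{b}{a}xy_x^2\,dxdt$ leaves a net coefficient $+\frac{1}{2}$ on the $x\eta\frac{b}{a}y_x^2$ integral; rearranging all contributions and multiplying by $-2$ then reproduces \eqref{1new}.

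The main obstacle is the careful justification of the boundary limits at $x=0$, where the operator degenerates. The time integration-by-parts step leaves $\lim_{x\to 0^+}\frac{x y_t^2(t,x)}{\sigma(x)}$: since $y$ is classical one has $y_t(t,\cdot)\in H^1_{\frac{1}{\sigma},0}(0,1)$, and writing $\frac{x}{\sigma}=\frac{x\eta}{a}$ with $\eta$ bounded, Lemma \ref{lemmalimite}.5 gives that this limit is zero. The space integration-by-parts step leaves $\lim_{x\to 0^+}x\eta(x)y_x^2(t,x)$, and this is precisely where Hypothesis \ref{Ass1} is decisive: the hypothesis covers exactly the two regimes handled by Lemma \ref{lemmalimite}.3 (when $K\le 1$) and Lemma \ref{lemmalimite}.4 (when $K>1$, together with $\frac{xb}{a}\in L^\infty$). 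The boundary evaluations at $x=1$ are straightforward and yield precisely the terms $-\frac{1}{\sigma(1)}\int_s^T y_t^2(t,1)\,dt$ and $-\eta(1)\int_s^T y_x^2(t,1)\,dt$ appearing in \eqref{1new}; notice that the dissipative boundary condition at $x=1$ is not needed to establish this identity.
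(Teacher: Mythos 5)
Your proposal is correct and follows essentially the same route as the paper: the multiplier $\frac{xy_x}{\sigma}$, the identities $\bigl(\tfrac{x}{\sigma}\bigr)'=\tfrac{1}{\sigma}\bigl(1-\tfrac{x(a'-b)}{a}\bigr)$ and $(x\eta)'=\eta\bigl(1+\tfrac{xb}{a}\bigr)$, and the vanishing boundary terms at $x=0$ via Lemma \ref{lemmalimite} (item 5 for $\tfrac{x}{\sigma}y_t^2$, items 3--4 for $x\eta y_x^2$ according to $K\le 1$ or $K>1$ with $\tfrac{xb}{a}\in L^\infty$). The only cosmetic difference is that the paper first rewrites $ay_{xx}+by_x=\sigma(\eta y_x)_x$ before expanding, whereas you keep the non-divergence form and divide by $\sigma$; after expansion the computations coincide.
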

\begin{proof}
Take $s \in (0,T)$; then, multiplying the equation of \eqref{wave} by $\ds\frac{xy_x}{\sigma}$, integrating over $Q_s:= (s,T) \times (0,1)$ and recalling \eqref{defsigma}, we have
\begin{equation}\label{*}
\begin{aligned}
0&= \int_{Q_s} \frac{y_{tt}xy_x}{\sigma} dxdt- \int_{Q_s}x(\eta y_x)_xy_xdxdt\\
&= \int_0^1 \left[\frac{xy_xy_t}{\sigma}\right]_{t=s}^{t=T} dx - \int_{Q_s} \frac{xy_{xt}y_t}{\sigma}dxdt-\int_{Q_s}x(\eta' y_x +\eta y_{xx})y_x dxdt\\
&=\int_0^1\left[\frac{xy_xy_t}{\sigma}\right]_{t=s}^{t=T} dx -\frac{1}{2} \int_{Q_s} \frac{x}{\sigma}(y_t^2)_xdxdt-\int_{Q_s}x\eta' y_x^2 dxdt -\frac{1}{2} \int_{Q_s} x\eta (y_{x}^2)_x dxdt\\
&= \int_0^1\left[\frac{xy_xy_t}{\sigma}\right]_{t=s}^{t=T} dx-\frac{1}{2}\int_s^T \left[\frac{x}{\sigma}y_t^2\right]_{x=0}^{x=1} dt +\frac{1}{2}  \int_{Q_s}\left(\frac{x}{\sigma}\right)'y_t^2dxdt-\int_{Q_s}x\eta \frac{b}{a} y_x^2 dxdt  \\
&  -\frac{1}{2}\int_s^T \left[x\eta y_x^2\right]_{x=0}^{x=1}dt+\frac{1}{2} \int_{Q_s}(x\eta)' y_x^2dxdt.
\end{aligned}
\end{equation}
Recalling the definition of $\eta$ and $\sigma$, we immediately find
\begin{equation}\label{1}
\begin{aligned}
0&= \int_0^1\left[\frac{xy_xy_t}{\sigma}\right]_{t=s}^{t=T} dx  -\frac{1}{2}\int_s^T \left[\frac{x}{\sigma}y_t^2\right]_{x=0}^{x=1} dt - \frac{1}{2}\int_s^T \left[x\eta y_x^2\right]_{x=0}^{x=1}dt\\
&- \frac{1}{2} \int_{Q_s} x \eta \frac{b}{a}  y_x^2 dxdt +\frac{1}{2} \int_{Q_s}\left(1-\frac{x(a'-b)}{a}\right) \frac{1}{\sigma}y_t^2 dxdt+ \frac{1}{2} \int_{Q_s} \eta y_x^2 dxdt
\end{aligned}
\end{equation}
for all $s \in (0,T)$. 

Now, we consider the boundary terms.
Thanks to the boundary conditions of $y$, \eqref{limite} and Lemma \ref{lemmalimite}, we immediately have that
\[
\lim_{x \rightarrow 0}\frac{x}{\sigma}y_t^2(t,x)=\lim_{x \rightarrow 0} \frac{x}{a}\eta y_t^2(t,x)= 0
\]
and
\[
\lim_{x \rightarrow 0}x\eta y_x^2(t,x)=0.
\]
Hence, \eqref{1} multiplied by $2$ gives \eqref{1new}.
\end{proof}

\begin{Proposition}\label{Prop2}
 Assume Hypothesis $\ref{Ass1}$ and let $y$ be a classical solution of \eqref{wave}. Then, for all $T>s>0$ we have
\begin{equation}\label{sopraBT}
\begin{aligned}
&\int_{Q_s}\left(1-\frac{x(a'-b)}{a}+\frac{K}{2}\right) \frac{1}{\sigma}y_t^2 dxdt +
\int_{Q_s} \left(1-x\frac{b}{a}-\frac{K}{2}\right)\eta y_x^2 dxdt= (B.T.)
\end{aligned}
\end{equation}
where 
\begin{equation}\label{BT}
(B.T.)=\int_0^1\left[-2x\frac{y_xy_t}{\sigma}   + \frac{K}{2} \frac{yy_t}{\sigma}\right]_{t=s}^{t=T} dx +\int_s^T \left[\frac{1}{\sigma}y_t^2+\eta y_x^2 -\frac{K}{2} \eta yy_x\right](t,1) dt
\end{equation}
and $Q_s:= (s,T) \times (0,1)$.
\end{Proposition}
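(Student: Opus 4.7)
The strategy is to combine \eqref{1new} with a second multiplier identity. A direct comparison between \eqref{sopraBT}--\eqref{BT} and \eqref{1new} shows that, once \eqref{1new} is rearranged by moving its boundary contributions at $x=1$ and the time--boundary term $-2\int_0^1[xy_xy_t/\sigma]_{t=s}^{t=T}dx$ to the right-hand side, the two differ precisely by $K/2$ times the identity
\begin{equation*}
\int_{Q_s}\frac{y_t^2}{\sigma}\,dxdt-\int_{Q_s}\eta y_x^2\,dxdt=\int_0^1\left[\frac{yy_t}{\sigma}\right]_{t=s}^{t=T}\!dx-\int_s^T(\eta yy_x)(t,1)\,dt.
\end{equation*}
So the work reduces to deriving this companion identity and adding it, weighted by $K/2$, to the rearranged form of \eqref{1new}.

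To derive the companion identity I would use the classical multiplier $y/\sigma$. Writing the equation of \eqref{wave} as $y_{tt}=\sigma(\eta y_x)_x$ (recall \eqref{defsigma}), multiplying by $y/\sigma$ and integrating over $Q_s$ give
\begin{equation*}
0=\int_{Q_s}\frac{yy_{tt}}{\sigma}\,dxdt-\int_{Q_s}y(\eta y_x)_x\,dxdt.
\end{equation*}
Integration by parts in $t$ on the first term produces $\int_0^1[yy_t/\sigma]_{t=s}^{t=T}dx-\int_{Q_s}y_t^2/\sigma\,dxdt$, while integration by parts in $x$ on the second term produces $\int_s^T[\eta yy_x]_{x=0}^{x=1}dt-\int_{Q_s}\eta y_x^2\,dxdt$. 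Using the Dirichlet condition $y(t,0)=0$ and the vanishing of the $x=0$ boundary trace discussed below, one obtains exactly the displayed identity.

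The only delicate point is the vanishing of $\eta(x)y(t,x)y_x(t,x)$ as $x\to 0$. For a classical solution, Theorem \ref{esistenza} provides $y(t,\cdot)\in D(A)=H^2_{\frac{1}{\sigma},0}(0,1)$, so both $y(t,\cdot)\in H^1_{\frac{1}{\sigma},0}(0,1)$ and $y(t,\cdot)\in H^2_{\frac{1}{\sigma}}(0,1)$ hold. Lemma \ref{lemmalimite}.1, applied with $u=y(t,\cdot)$, then yields $\lim_{x\to 0}y(t,x)y_x(t,x)=0$; boundedness of $\eta$ closes the point. This is the step I expect to be the only nontrivial one, and it is already prepared by Lemma \ref{lemmalimite}.

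Once the companion identity is in hand, the conclusion is purely algebraic: rearranging \eqref{1new} to move its boundary terms to the right (which only flips signs) and adding $K/2$ times the companion identity produces the interior coefficients $1-x(a'-b)/a+K/2$ in front of $y_t^2/\sigma$ and $1-xb/a-K/2$ in front of $\eta y_x^2$, while on the right-hand side the new contributions $\tfrac{K}{2}\int_0^1[yy_t/\sigma]_{t=s}^{t=T}dx$ and $-\tfrac{K}{2}\int_s^T(\eta yy_x)(t,1)\,dt$ combine with the rearranged boundary terms of \eqref{1new} to give exactly $(B.T.)$ as defined in \eqref{BT}. No further obstacle is expected, since all the integrals involved are finite under the regularity provided by Theorem \ref{esistenza} and Hypothesis \ref{Ass1}.
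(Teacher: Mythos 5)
Your proposal is correct and follows essentially the same route as the paper: multiply the equation by $y/\sigma$, integrate by parts over $Q_s$, remove the $x=0$ boundary trace via Lemma \ref{lemmalimite}.1 (legitimate since $y(t,\cdot)\in H^2_{\frac{1}{\sigma},0}(0,1)$ for a classical solution), and combine the resulting identity, weighted by $K/2$, with \eqref{1new}. Your sign bookkeeping for the $K/2$-weighted identity is the one that reproduces exactly the coefficients in \eqref{sopraBT}--\eqref{BT}; the paper's phrase ``summing \eqref{2new} and \eqref{1new}'' must be read with the corresponding sign convention, so this is only a presentational difference, not a mathematical one.
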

\begin{proof}
By multiplying the equation in \eqref{wave} by $\ds\frac{y}{\sigma}$ and integrating over $Q_s$, we have
\begin{equation}\label{2}
\begin{aligned}
\int_{Q_s}\left(-\frac{y_t^2}{\sigma} + \eta y_x^2\right) dxdt +  \int_0^1\left[\frac{yy_t}{\sigma}\right]_{t=s}^{t=T} dx -  \int_s^T\left[\eta y_xy\right]_{x=0}^{x=1} dt=0.
\end{aligned}
\end{equation}
Using the fact that $y$ is a classical solution of \eqref{wave}, that $y(t,0)=0$ and Lemma \ref{lemmalimite}, one has 
\[
  \int_s^T\left[\eta y_xy\right]_{x=0}^{x=1} dt =   \int_s^T [\eta y_xy](t,1) dt;
\]
thus, multiplying \eqref{2} by  $\ds\frac{K}{2}$, one has
 \begin{equation}\label{2new}
\begin{aligned}
\frac{K}{2}\int_{Q_s}\left(-\frac{y_t^2}{\sigma} + \eta y_x^2\right) dxdt + \frac{K}{2} \int_0^1\left[\frac{yy_t}{\sigma}\right]_{t=s}^{t=T} dx -\frac{K}{2} \int_s^T\left[\eta y_xy\right](t,1) dt=0.
\end{aligned}
\end{equation}

By summing \eqref{2new} and \eqref{1new}, we get the claim.
\end{proof}

\begin{Proposition}\label{Prop3}
 Assume Hypothesis $\ref{Ass1}$, $\beta\ge0$ and let $y$ be a classical solution of \eqref{wave}. Then, for any $T>s>0$ and for every $\delta >0$ we have
\begin{equation}\label{Stimabo}
\begin{aligned}
\int_s^Ty^2(t,1)dt &\le \left(2+  \frac{2C_{HP}}{\min_{[0,1]}^3\eta}+\frac{1}{\delta}+\frac{1}{\delta}\frac{\max_{[0,1]}\eta+ C_{HP}}{\min_{[0,1]}^2 \eta} \right)E_y(s) \\
&+  2\delta \left(\frac{1}{\min_{[0,1]}^3\eta} + 1 \right)\int_s^T E_y(t) dt.
\end{aligned}
\end{equation}
\end{Proposition}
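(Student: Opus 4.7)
The strategy is a multiplier method in which the multiplier itself is built via Proposition~\ref{Prop2.2}. For every $t\in[s,T]$, set $\lambda=y(t,1)$ and let $z(t,\cdot)\in H^2_{\frac{1}{\sigma},0}(0,1)$ be the corresponding solution, so that
\[
-\sigma(\eta z_x)_x=0,\qquad z(t,0)=0,\qquad \eta(1)z_x(t,1)+\beta z(t,1)=y(t,1).
\]
Linearity of the map $\lambda\mapsto z$ combined with the $C^1$ regularity of $t\mapsto y(t,1)$ (from the classical-solution regularity in Theorem~\ref{esistenza}) gives $z(t,\cdot)=y(t,1)\,z_0(\cdot)$ with $z_0$ the datum-$1$ solution, and in particular $z_t(t,\cdot)=y_t(t,1)\,z_0(\cdot)$ solves the same elliptic problem with datum $y_t(t,1)$.

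I would then multiply the equation $y_{tt}=\sigma(\eta y_x)_x$ by $z/\sigma$ and integrate over $Q_s$. Integration by parts in $t$ on the left and in $x$ on the right produces a boundary term at $x=0$ that vanishes by Lemma~\ref{lemmalimite}.1 applied to $(z,y)$, a boundary term at $x=1$ that is rewritten via $\eta(1)y_x(t,1)=-y_t(t,1)-\beta y(t,1)$, and an interior term $\int_0^1\eta y_xz_x\,dx$ which, through the variational identity \eqref{varionalproblem} tested with $\phi=y(t,\cdot)\in H^1_{\frac{1}{\sigma},0}(0,1)$, equals $y^2(t,1)-\beta z(t,1)y(t,1)$. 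All the $\beta$-contributions cancel, yielding the key identity
\[
\int_s^T y^2(t,1)\,dt = -\int_0^1\left[\frac{y_t z}{\sigma}\right]_{t=s}^{t=T}dx + \int_{Q_s}\frac{y_t z_t}{\sigma}\,dx\,dt - \int_s^T y_t(t,1)z(t,1)\,dt.
\]

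The rest is careful bookkeeping. For the first piece I would use Cauchy--Schwarz combined with the sharper bound $\|z(t)\|^2_{\frac{1}{\sigma}}\le C_{HP}(\min_{[0,1]}\eta)^{-2}y^2(t,1)$, which follows from Proposition~\ref{H1a} applied to $z(t,\cdot)\in H^1_{\frac{1}{\sigma},0}$ together with $\int_0^1\eta(z')^2\,dx\le |||z|||_1^2\le y^2(t,1)/\min_{[0,1]}\eta$ from \eqref{02} (this avoids the spurious $\max_{[0,1]}\eta$ factor), the trace bound $y^2(t,1)\le 2E_y(t)/\min_{[0,1]}\eta$ from $|y(t,1)|\le\|y_x(t)\|_{L^2}$ and Corollary~\ref{equivalenze}, and the monotonicity $E_y(T)\le E_y(s)$ from Theorem~\ref{Energiadecrescente}; this produces $(2+2C_{HP}/\min_{[0,1]}^3\eta)E_y(s)$. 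For the other two pieces I would apply Young's inequality in the loose form $|ab|\le\delta a^2+\delta^{-1}b^2$: the $y_t$ factors, via $\int_0^1 y_t^2/\sigma\,dx\le 2E_y(t)$ and $z^2(t,1)\le y^2(t,1)/\min_{[0,1]}^2\eta$, produce the $2\delta$ and $2\delta/\min_{[0,1]}^3\eta$ multiples of $\int_s^T E_y(t)\,dt$; the $z_t$ and $y_t(t,1)$ factors, controlled by \eqref{02} applied to $z_t$ and by the dissipation identity $\int_s^T y_t^2(t,1)\,dt=E_y(s)-E_y(T)\le E_y(s)$ from Theorem~\ref{Energiadecrescente}, yield the $\delta^{-1}(\max_{[0,1]}\eta+C_{HP})/\min_{[0,1]}^2\eta$ and $\delta^{-1}$ multiples of $E_y(s)$.

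The main conceptual step is identifying the correct multiplier: once $z$ is built through Proposition~\ref{Prop2.2}, the variational identity automatically generates the desired $y^2(t,1)$ term with the $\beta$-contributions arranged to cancel. The technical care lies in justifying the $C^1$ time regularity of $z$ (through the explicit factorization $z=y(t,1)z_0$), in verifying the vanishing of the $x=0$ boundary terms via Lemma~\ref{lemmalimite}, and in selecting the Hardy--Poincar\'e-based estimate rather than the cruder $\|z\|_{\frac{1}{\sigma}}$-bound from \eqref{02} in the first term, so that the coefficient of $E_y(s)$ retains only $C_{HP}$ and not $\max_{[0,1]}\eta$, matching the statement.
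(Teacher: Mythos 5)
Your proposal is correct and follows essentially the same route as the paper: the same multiplier $z$ from Proposition \ref{Prop2.2} with $\lambda=y(t,1)$, the same key identity \eqref{3pezzetti}, and the same three estimates (Hardy--Poincar\'e plus \eqref{02} for the bracket term, Young's inequality with $\delta$ and the dissipation identity for the other two, with \eqref{02} applied to $z_t$ after differentiating \eqref{VP} in time). Your explicit factorization $z(t,\cdot)=y(t,1)z_0(\cdot)$ to justify the time regularity of $z$, and testing the variational identity with $\phi=y(t,\cdot)$ instead of multiplying the strong form by $y/\sigma$, are only cosmetic refinements of the paper's argument.
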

\begin{proof}
To prove the statement, fix $t \in [s,T]$ and set $\lambda =y(t,1)$ and let $z=z(t,\cdot)$ be the unique solution of 
\[
\int_0^1 \eta z'\phi' dx+ \beta z(1)\phi(1) = \lambda \phi(1) \quad \forall \; \phi \in H^1_{\frac{1}{\sigma},0}(0,1).
\]
By Proposition \ref{Prop2.2}, $z(t,\cdot) \in H^2_{\frac{1}{\sigma},0}(0,1)$ for all $t$ and solves
\begin{equation}\label{problem1}
\begin{cases}
&- \sigma (\eta z_x)_x=0,\\
&\eta z_x(t,1) +\beta z(t,1)=\lambda.
\end{cases}
\end{equation}
Now, 
multiply the equation in \eqref{wave}  by $\ds \frac{z}{\sigma}$ and integrate over $Q_s$. Then, we have
\[
\begin{aligned}
0&= \int_{Q_s} \left(y_{tt} \frac{z}{\sigma} -(\eta y_x)_x z\right)dxdt = \int_0^1 \left[ y_t\frac{z}{\sigma}\right]_{t=s}^{t=T} dx\\
&- \int_{Q_s} y_t \frac{z_t}{\sigma}dxdt - \int_s^T\left[ \eta y_xz\right]_{x=0}^{x=1}dt + \int_{Q_s} \eta y_xz_xdxdt.
\end{aligned}
\]
By Lemma \ref{lemmalimite}
\[
\lim_{\epsilon \rightarrow 0} (\eta y_xz)(t, \epsilon)=0,
\]
thus we have
\begin{equation}\label{star1}
\int_0^1 \left[ y_t\frac{z}{\sigma}\right]_{t=s}^{t=T} dx- \int_{Q_s} y_t \frac{z_t}{\sigma}dxdt = \int_s^T \eta y_x(t,1) z(1)dt -\int_{Q_s} \eta y_xz_xdxdt.
\end{equation}
By multiplying the equation in \eqref{problem1} by $\ds \frac{y}{\sigma}$ and integrating on $Q_s$, one has
\[
\int_{Q_s}  (\eta z_x)_x y dxdt=0.
\]
Using the fact that $(\eta z_x y)(t,0)=0$ and $\ds (\eta z_x)(t,1)= \lambda -\beta z(t,1)$, we get
\[
\begin{aligned}
\int_s^T\left[ \eta z_xy\right]_{x=0}^{x=1}dt & - \int_{Q_s} \eta z_x y_x dxdt=0 \Longleftrightarrow  \int_s^T( \eta z_xy)(t,1)dt= \int_{Q_s} \eta z_x y_x dxdt \\
&\Longleftrightarrow   \int_s^T( \lambda - \beta z(t,1))y(t,1) dt = \int_{Q_s} \eta z_x y_x dxdt.
\end{aligned}
\]
Substituting in \eqref{star1} and recalling that $y$ solves \eqref{wave} and $\lambda = y(t,1)$, we have
\[
\begin{aligned}
\int_0^1\left[\frac{y_tz}{\sigma}\right]_{t=s}^{t=T}dx &- \int_{Q_s} \frac{y_t z_t}{\sigma} dxdt =\int_s^T(\eta y_xz)(t,1) dt - \int_s^T (\lambda -\beta z (t,1))y(t,1) dt\\
&=\int_s^T (\eta y_x z)(t,1) dt -\int_s^T y^2(t,1) dt + \beta \int_s^T (zy)(t,1) dt\\
&
= - \int_s^Ty_t (t,1) z(t,1)dt - \int_s^Ty^2(t,1) dt.
\end{aligned}\]
Then
\begin{equation}\label{3pezzetti}
\int_s^Ty^2(t,1)dt = \int_{Q_s}\frac{y_tz_t}{\sigma}dxdt - \int_s^T(y_t z)(t,1)dt - \int_0^1 \left[\frac{y_t z}{\sigma}\right]_{t=s}^{t=T}dx.
\end{equation}
Hence, to bound $\int_s^Ty^2(t,1)dt$, we estimate the last three terms in the previous equality. By Proposition \ref{H1a}, \eqref{02} and recalling that $\lambda= y(t,1)$, we have
\[
\begin{aligned}
\int_0^1\left| \frac{y_t z}{\sigma}(\tau, x)\right|dx&\le \frac{1}{2} \int_0^1 \frac{y_t^2(\tau, x)}{\sigma} dx + \frac{1}{2} \int_0^1 \frac{z^2(\tau, x)}{\sigma}dx \\
&\le  \frac{1}{2}\int_0^1 \frac{y_t^2(\tau, x)}{\sigma} dx +\frac{1}{2}\frac{C_{HP}}{\min_{[0,1]}\eta} \int_0^1 (z_x^2\eta)(\tau, x) dx \\
& \le \frac{1}{2} \int_0^1 \frac{y_t^2(\tau, x)}{\sigma} dx +\frac{1}{2}\frac{C_{HP}}{\min_{[0,1]}^2\eta}y^2(t,1)\\
& \le E_y(\tau) + \frac{1}{2}\frac{C_{HP}}{\min_{[0,1]}^2\eta}y^2(t,1).
\end{aligned}
\]
for all $\tau \in [s,T]$. 
By \eqref{*z}, one has
\begin{equation}\label{y(1)}
\frac{1}{2}y^2(t,1) \le \frac{1}{2}\frac{1}{\min_{[0,1]}\eta} \int_0^1 (y_x^2 \eta)(t,x) dx \le \frac{1}{\min_{[0,1]}\eta} E_y(t),
\end{equation}
for all $ t \in [s,T]$. Thus, by Theorem \ref{Energiadecrescente},
\[
\int_0^1\left| \frac{y_t z}{\sigma}(\tau, x)\right|dx\le E_y(\tau) + \frac{C_{HP}}{\min_{[0,1]}^3\eta}E_y(t)\le \left(1+ \frac{C_{HP}}{\min_{[0,1]}^3\eta}\right) E_y(s).
\]
Thus, again by Theorem \ref{Energiadecrescente},
\begin{equation}\label{step1}
\left|  \int_0^1 \left[\frac{y_t z}{\sigma}\right]_{t=s}^{t=T}dx \right| \le 2\left(1+ \frac{C_{HP}}{\min_{[0,1]}^3\eta}\right) E_y(s).
\end{equation}

Moreover, for any $\delta >0$ we have
\begin{equation}\label{7.1}
\int_s^T | (y_tz)(t,1) |dt \le \frac{1}{\delta} \int_s^T  y_t^2 (t,1)dt + \delta \int_s^T z^2(t,1)dt.
\end{equation}
By  \eqref{*z},  \eqref{02} and \eqref{y(1)} one has
\[
\begin{aligned}
z^2(t,1) &\le \frac{1}{\min_{[0,1]}\eta}\int_0^1 (\eta z_x^2)(t,x) dx \le \frac{1}{\min_{[0,1]}\eta}|||z|||_1^2 \\
&\le \frac{y^2(t,1)}{\min_{[0,1]}^2\eta} \le \frac{2}{\min_{[0,1]}^3\eta} E_y(t).
\end{aligned}
\]
Thus, by \eqref{7.1} and Theorem \ref{Energiadecrescente}, we have
\begin{equation}\label{step2}
\begin{aligned}
\int_s^T |(y_tz)(t,1)| dt & \le \frac{1}{\delta} \int_s^T  y_t^2 (t,1)dt + \delta \frac{2}{\min_{[0,1]}^3\eta} \int_s^T E_y(t) dt  \\
& \le - \frac{1}{\delta} \int_s^T \frac{dE_y(t)}{dt} dt + \delta \frac{2}{\min_{[0,1]}^3\eta} \int_s^T E_y(t) dt  \\
& \le \frac{E_y(s)}{\delta} +  \frac{2 \delta }{ \min_{[0,1]}^3\eta} \int_s^T E_y(t) dt.
\end{aligned}
\end{equation}

Finally, we estimate the first integral in \eqref{3pezzetti}. To this aim, consider again problem \eqref{VP} and differentiate with respect to $t$. Then
\[
\begin{cases}
&- \sigma (\eta z_{tx})_x=0,\\
&\eta z_{tx}(t,1) +\beta z_t(t,1)=\lambda_t =y_t(t,1).
\end{cases}
\]
Clearly, $z_t$ satisfies the estimates in \eqref{02}; in particular
\[
|||z_t|||^2_1 \le \frac{y_t^2(t,1)}{\min_{[0,1]} \eta} \quad \text{and} \quad \|z_t\|^2_{{\frac{1}{\sigma}}} \le \frac{\max_{[0,1]} \eta+ C_{HP}}{\min_{[0,1]}^2 \eta} y_t^2(t,1).
\]
Thus, for $\delta >0$, we find
\begin{equation}\label{step3}
\begin{aligned}
\int_{Q_s}\left|\frac{y_t z_t}{\sigma}\right|dxdt &\le \delta \int_{Q_s} \frac{y_t^2}{\sigma}dxdt + \frac{1}{\delta}\int_{Q_s} \frac{z_t^2}{\sigma}dxdt\\
&\le 2\delta \int_s^TE_y(t)dt + \frac{1}{\delta}\frac{\max_{[0,1]} \eta+ C_{HP}}{\min_{[0,1]}^2 \eta}\int_s^T y_t^2(t,1)dt\\
&= 2\delta \int_s^TE_y(t)dt - \frac{1}{\delta}\frac{\max_{[0,1]} \eta+ C_{HP}}{\min_{[0,1]}^2 \eta}\int_s^T\frac{dE_y(t)}{dt}dt\\
& \le  2\delta \int_s^TE_y(t)dt+ \frac{1}{\delta}\frac{\max_{[0,1]} \eta+ C_{HP}}{\min_{[0,1]}^2 \eta}E_y(s).
\end{aligned}
\end{equation}
Hence, going back to \eqref{3pezzetti}, by \eqref{step1}, \eqref{step2} and \eqref{step3}, we get
\[
\begin{aligned}
\int_s^Ty^2(t,1)dt &\le 2\left(1+ \frac{C_{HP}}{\min_{[0,1]}^3\eta}\right)E_y(s)  +\frac{E_y(s)}{\delta} +  \frac{2 \delta }{ \min_{[0,1]}^3\eta} \int_s^T E_y(t) dt\\
&+2\delta \int_s^TE_y(t)dt+ \frac{1}{\delta}\frac{\max_{[0,1]} \eta+ C_{HP}}{\min_{[0,1]}^2 \eta}E_y(s)\\
&= \left(2+  \frac{2C_{HP}}{\min_{[0,1]}^3\eta}+\frac{1}{\delta}+\frac{1}{\delta}\frac{\max_{[0,1]}\eta+ C_{HP}}{\min_{[0,1]}^2 \eta} \right)E_y(s) \\
&+  2\delta \left(\frac{1}{\min_{[0,1]}^3\eta} + 1 \right)\int_s^T E_y(t) dt,
\end{aligned}
\]
as claimed.
\end{proof}

Now, we assume an additional hypothesis on the functions $a$ and $b$.
\begin{Assumptions}\label{Ass2}
Hypothesis \ref{Ass1} holds and there exists $\ve_0>0$ such that 
\[
(2-K)a-2x|b|\geq \ve_0a \mbox{ for every }x\in [0,1].
\]
\end{Assumptions}

Notice that  the required inequality implies that $|b(x)|\leq \frac{(2-K -\ve_0)a}{2x}$. Thus, if $a(x)=x^K$, the condition reads $|b(x)|\leq \frac{(2-K -\ve_0)x^{K-1}}{2}$. Of course,  the previous condition is automatically satisfied in absence of the drift.

\begin{Proposition}\label{Prop4}
 Assume Hypothesis $\ref{Ass2}$, $\beta \ge 0$ and let $y$ be a classical solution of \eqref{wave}. Then, for any $T>s>0$
\begin{equation}\label{tondonew}
\begin{aligned}
\frac{\ve_0}{2} \int_{Q_s} \left(\frac{y_t^2}{\sigma} + \eta y_x^2 \right)dxdt&
\le  4 \Theta E_y(s)  + \left( \frac{1}{\sigma(1)} + \frac{1}{\eta(1)}+\frac{\beta}{\eta(1)}+ \frac{K}{4} \right)(E_y(s)-E_y(T)) \\
&+  \left(\frac{\beta^2}{\eta(1)}+ \frac{K\beta}{2}+\frac{\beta}{\eta(1)}+ \frac{K}{4}\right) \int_s^Ty^2(t,1)dt,
\end{aligned}
\end{equation}
where 
\begin{equation} \label{Theta}\ds \Theta:= \max\left\{\frac{1}{a(1)}+K\frac{C_{HP}}{\min_{[0,1]}\eta }; 1+ \frac{K}{4} \right\}.
\end{equation}
\end{Proposition}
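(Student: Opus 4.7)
The plan is to start from the equality \eqref{sopraBT} in Proposition~\ref{Prop2}, bound its left-hand side from below using Hypothesis~\ref{Ass2}, and estimate the boundary expression \eqref{BT} from above term by term. For the lower bound, recall that $K=\sup_{x\in(0,1]}x|a'|/a$ gives $-xa'\ge -Ka$, so $(2+K)a-2x(a'-b)\ge(2-K)a-2x|b|$ and $(2-K)a-2xb\ge(2-K)a-2x|b|$; Hypothesis~\ref{Ass2} makes both right-hand sides $\ge\ve_0 a$, so the coefficients of $y_t^2/\sigma$ and $\eta y_x^2$ in the LHS of \eqref{sopraBT} are each $\ge \ve_0/2$, yielding
\[
\text{LHS of }\eqref{sopraBT}\ \ge\ \frac{\ve_0}{2}\int_{Q_s}\left(\frac{y_t^2}{\sigma}+\eta y_x^2\right)dx\,dt.
\]

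For the $x$-integrated bundle in \eqref{BT}, I would apply Young's inequality to $-2xy_xy_t/\sigma=-2x\eta y_xy_t/a$ to get $|2xy_xy_t/\sigma|\le \eta y_x^2+(x^2/a)(y_t^2/\sigma)$; the monotonicity property \eqref{crescente} with $\gamma=2\ge K$ (permitted because $K<2$) gives $x^2/a\le 1/a(1)$. For the other piece, $|(K/2)yy_t/\sigma|\le (K/4)(y^2/\sigma+y_t^2/\sigma)$, and Proposition~\ref{H1a} combined with Corollary~\ref{equivalenze} bounds $\int_0^1 y^2/\sigma\,dx$ by $(2C_{HP}/\min_{[0,1]}\eta)E_y(\tau)$. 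Summing the contributions at $\tau=s$ and $\tau=T$ and invoking $E_y(T)\le E_y(s)$ from Theorem~\ref{Energiadecrescente} collapses both bundles into a bound of the form $4\Theta\,E_y(s)$, with $\Theta$ as in \eqref{Theta}. For the $t$-integrated piece, the boundary condition $y_t(t,1)+\eta(1)y_x(t,1)+\beta y(t,1)=0$ gives $\eta(1)y_x(t,1)=-y_t(t,1)-\beta y(t,1)$; substituting into $\eta(1)y_x^2(t,1)=(y_t+\beta y)^2/\eta(1)$ and bounding the cross term via $|2\beta yy_t|\le \beta(y^2+y_t^2)$, together with the identity $-(K/2)\eta(1)(yy_x)(t,1)=(K/2)yy_t+(K\beta/2)y^2$ handled by $|(K/2)yy_t|\le (K/4)(y^2+y_t^2)$, and finally using $\int_s^Ty_t^2(t,1)\,dt=E_y(s)-E_y(T)$ from Theorem~\ref{Energiadecrescente}, produces precisely the coefficients displayed in \eqref{tondonew} for $(E_y(s)-E_y(T))$ and for $\int_s^Ty^2(t,1)\,dt$.

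Assembling the lower bound with these upper estimates yields \eqref{tondonew}. The main obstacle I anticipate is the careful constant tracking in the $x$-integrated bundle, where the Young-inequality weights must be tuned so that the two pieces $-2xy_xy_t/\sigma$ and $(K/2)yy_t/\sigma$ jointly fit inside a single factor $4\Theta$; this relies crucially on $x^2/a\le 1/a(1)$ (genuinely requiring $K<2$) together with the Hardy--Poincar\'e control $\int_0^1 y^2/\sigma\,dx\le (C_{HP}/\min_{[0,1]}\eta)\int_0^1\eta y_x^2\,dx$, and on using the two clauses of the $\max$ in \eqref{Theta} to absorb the respective contributions.
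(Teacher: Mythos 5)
Your plan follows the paper's own proof almost step by step: the lower bound on the left-hand side of \eqref{sopraBT} via Hypothesis \ref{Ass2} and $x|a'|\le Ka$, the treatment of the $x$-integrated bracket by Young's inequality, \eqref{crescente} with $\gamma=2$ and Hardy--Poincar\'e, and the treatment of the $t$-integrated bracket using $\eta(1) y_x(t,1)=-y_t(t,1)-\beta y(t,1)$ together with $\frac{dE_y}{dt}=-y_t^2(t,1)$; your boundary coefficients for $E_y(s)-E_y(T)$ and $\int_s^T y^2(t,1)\,dt$ come out exactly as in \eqref{tondonew}. The one point to fix is the direction of the Young split for the first bundle: you write $\left|2x\frac{y_xy_t}{\sigma}\right|\le \eta y_x^2+\frac{x^2}{a}\frac{y_t^2}{\sigma}$, which after $\frac{x^2}{a}\le\frac{1}{a(1)}$ and Hardy--Poincar\'e produces the constant $\max\left\{1+\frac{K C_{HP}}{4\min_{[0,1]}\eta},\ \frac{1}{a(1)}+\frac{K}{4}\right\}$; this is not in general dominated by $\Theta$ of \eqref{Theta} (it need not be when, say, $a(1)<1$ and $C_{HP}$ is small compared with $\min_{[0,1]}\eta$), so as written you do not literally obtain the stated bound $4\Theta E_y(s)$. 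The paper splits the other way, $\left|2x\frac{y_xy_t}{\sigma}\right|\le \frac{x^2}{a}\,\eta y_x^2+\frac{y_t^2}{\sigma}$, so that $\frac{x^2}{a}\le\frac{1}{a(1)}$ loads onto the $\eta y_x^2$ term (whose coefficient then is $\frac{1}{a(1)}+\frac{K C_{HP}}{4\min_{[0,1]}\eta}\le \frac{1}{a(1)}+\frac{K C_{HP}}{\min_{[0,1]}\eta}$) while the $\frac{y_t^2}{\sigma}$ term keeps coefficient $1+\frac{K}{4}$, which is exactly what the $\max$ in \eqref{Theta} encodes. With that single adjustment your argument coincides with the paper's proof; everything else, including the estimate of $h(t)$ at $x=1$ and the use of Theorem \ref{Energiadecrescente}, is correct.
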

\begin{proof}
By all the assumptions in force,
\[
1-\frac{x(a'-b)}{a}+\frac{K}{2}=\frac{(2-K)a+2(Ka-xa')+2xb}{2a}\geq \frac{\ve_0}{2},
\]
and
\[
\left(1-x\frac{b}{a}-\frac{K}{2}\right)\geq \frac{\ve_0}{2}
\]
as well. Thus the boundary terms given in \eqref{BT}, from \eqref{sopraBT} can be estimated by below in the following way:
\begin{equation}\label{star}
(B.T.) \ge  \frac{\ve_0}{2} \int_{Q_s} \left(\frac{y_t^2}{\sigma} + \eta y_x^2 \right)dxdt.
\end{equation}

Now, we estimate the boundary terms from above.  First of all, consider the term 
\[
\int_0^1\left( -2x\frac{y_xy_t}{\sigma} + \frac{K}{2} \frac{yy_t}{\sigma}\right) (\tau, x) dx
\]
for all $\tau \in [s,T]$. Using the fact that $\ds \frac{x^2}{a(x)} \le \frac{1}{a(1)}$ by \eqref{crescente}, together with Proposition \ref{H1a}, one has
\[
\begin{aligned}
&\int_0^1\left( -2x\frac{y_xy_t}{\sigma} + \frac{K}{2} \frac{yy_t}{\sigma}\right)(\tau, x) dx \le
\\
&\le \int_0^1 \frac{x^2y_x^2\eta}{a}(\tau, x) dx+ \int_0^1\frac{y_t^2}{\sigma} (\tau, x)dx+ \frac{K}{4}\int_0^1 \frac{y_t^2}{\sigma}(\tau, x)dx + \frac{K}{4}\int_0^1 \frac{y^2}{\sigma}(\tau, x)dx\\
&\le \frac{1}{a(1)}\int_0^1 \eta y_x^2(\tau, x) dx + \left(1 +\frac{K}{4}\right)\int_0^1 \frac{y_t^2}{\sigma}(\tau, x)dx +  K\frac{C_{HP}}{\min_{[0,1]}\eta }\int_0^1y_x^2 \eta (\tau, x)dx\\
&\le \max\left\{\frac{1}{a(1)}+ K\frac{C_{HP}}{\min_{[0,1]}\eta }; 1+ \frac{K}{4} \right\} \left(2E_y(\tau) -\beta y^2(\tau,1)\right).
\end{aligned}
\]
By \eqref{star} and \eqref{BT}, thanks to Theorem \ref{Energiadecrescente}, we get
\begin{equation}\label{tondo}
\begin{aligned}
\frac{\ve_0}{2} \int_{Q_s} \left(\frac{y_t^2}{\sigma} + \eta y_x^2 \right)dxdt&\le 2\Theta E_y(T) + 2\Theta E_y(s) + \int_s^T\left(\frac{y_t^2}{\sigma} + \eta y_x^2 -\frac{K}{2}\eta y y_x\right)(t,1)dt\\
& \le 4 \Theta E_y(s) + \int_s^T\left(\frac{y_t^2}{\sigma} + \eta y_x^2 -\frac{K}{2}\eta y y_x\right)(t,1)dt.
\end{aligned}
\end{equation}
where $\ds \Theta$ is defined in \eqref{Theta}.

Now, we need to estimate
$
\int_s^Th(t)dt,
$
where
\[
h(t) := \left(\frac{y_t^2}{\sigma} + \eta y_x^2 -\frac{K}{2}\eta y y_x\right)(t,1).
\]
To this aim, recall that $\eta y_x(t,1)= -\beta y(t,1) -y_t(t,1)$. Thus
\[
\begin{aligned}
h(t)&= \frac{y_t^2}{\sigma}(t,1) + \frac{1}{\eta(1)}\left(  -\beta y(t,1) -y_t(t,1)\right)^2 - \frac{K}{2} \big( -\beta y(t,1) -y_t(t,1)\big) y(t,1)\\
& = \frac{y_t^2}{\sigma}(t,1) +  \frac{1}{\eta(1)}\big( \beta^2 y^2(t,1)+ y_t^2(t,1)+ 2\beta y(t,1)y_t(t,1) \big)\\
&+ \frac{K}{2} \beta y^2(t,1) + \frac{K}{2}y_t(t,1)y(t,1)\\
&=\left( \frac{1}{\sigma(1)} + \frac{1}{\eta(1)}\right) y_t^2(t,1) + \beta \left( \frac{\beta}{\eta(1)}+ \frac{K}{2}\right) y^2(t,1) + \left(\frac{2\beta}{\eta(1)}+ \frac{K}{2} \right)yy_t(t,1). 
\end{aligned}
\]

Thus, by Theorem \ref{Energiadecrescente},
\begin{equation}\label{tondo1}
\begin{aligned}
\int_s^T h(t)dt &=  \left( \frac{1}{\sigma(1)} + \frac{1}{\eta(1)}\right) \int_s^Ty_t^2(t,1)dt + \beta\left( \frac{\beta}{\eta(1)}+ \frac{K}{2}\right) \int_s^Ty^2(t,1)dt \\
& +\left(\frac{2\beta}{\eta(1)}+ \frac{K}{2} \right)\int_s^Tyy_t(t,1) dt\\
&\le \left( \frac{1}{\sigma(1)} + \frac{1}{\eta(1)}\right) \int_s^T-\frac{dE_y}{dt}dt + 
\beta\left( \frac{\beta}{\eta(1)}+ \frac{K}{2}\right) \int_s^Ty^2(t,1)dt\\
& + \frac{1}{2}\left(\frac{2\beta}{\eta(1)}+ \frac{K}{2} \right)\int_s^T y^2(t,1)dt +\frac{1}{2}\left(\frac{2\beta}{\eta(1)}+ \frac{K}{2} \right)\int_s^T-\frac{dE_y}{dt}dt\\
&=   \left( \frac{1}{\sigma(1)} + \frac{1}{\eta(1)}+\frac{\beta}{\eta(1)}+ \frac{K}{4} \right)(E_y(s)-E_y(T)) \\
&+  \left(\frac{\beta^2}{\eta(1)}+ \frac{K\beta}{2}+\frac{\beta}{\eta(1)}+ \frac{K}{4} \right) \int_s^Ty^2(t,1)dt.
\end{aligned}
\end{equation}
By \eqref{tondo} and \eqref{tondo1}, we have
\[
\begin{aligned}
\frac{\ve_0}{2} \int_{Q_s} \left(\frac{y_t^2}{\sigma} + \eta y_x^2 \right)dxdt&
\le  4 \Theta E_y(s)  + \left( \frac{1}{\sigma(1)} + \frac{1}{\eta(1)}+\frac{\beta}{\eta(1)}+ \frac{K}{4} \right)(E_y(s)-E_y(T)) \\
&+  \left(\frac{\beta^2}{\eta(1)}+ \frac{K\beta}{2}+\frac{\beta}{\eta(1)}+ \frac{K}{4} \right) \int_s^Ty^2(t,1)dt
\end{aligned}
\]
and \eqref{tondonew} holds.
\end{proof}

As a consequence of Propositions \ref{Prop3} and \ref{Prop4}, we can state the main result of the paper. As a first step, define
\[C_1:=\left(\frac{\beta^2}{\eta(1)}+ \frac{K\beta}{2}+\frac{\beta}{\eta(1)}+ \frac{K}{4} + \frac{\beta\ve_0}{2}\right) \left(\frac{1}{\min_{[0,1]}^3\eta} + 1 \right).\]
\begin{Theorem}\label{Energiastima}
 Assume Hypothesis $\ref{Ass2}$, $\beta \ge 0$ and let $y$ be a mild solution of \eqref{wave}. Then, for all $t >0$ and for all $\ds\delta \in \left(0, \frac{\ve_0}{2C_1}\right)$, we have
\begin{equation}\label{estiener}
E_y(t) \le E_y(0) e^{1- \frac{t}{M}},
\end{equation}
where 
\begin{equation}\label{M}
\begin{aligned}M&:=\frac{1}{C_{\ve_0, \beta, K, \eta}}\left(4\Theta + \frac{1}{\sigma(1)} + \frac{1}{\eta(1)}+\frac{\beta}{\eta(1)}+ \frac{K}{4}\right)\\
&+ \frac{1}{C_{\ve_0, \beta, K, \eta}}\left(\frac{\beta^2}{\eta(1)}+ \frac{K\beta}{2}+\frac{\beta}{\eta(1)}+ \frac{K}{4} + \frac{\beta\ve_0}{2}\right) \cdot\\
&\cdot\left(2+  \frac{2C_{HP}}{\min_{[0,1]}^3\eta}+\frac{1}{\delta}+\frac{1}{\delta}\frac{\max_{[0,1]}\eta+ C_{HP}}{\min_{[0,1]}^2 \eta} \right),
\end{aligned}
\end{equation}
being
$
C_{\ve_0, \beta, K, \eta}:=\ve_0 -  2\delta C_1
$
and $\Theta$ as in \eqref{Theta}.
\end{Theorem}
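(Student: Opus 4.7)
The plan is to combine Propositions \ref{Prop3} and \ref{Prop4} to deduce an integral bound of the form $\int_s^T E_y(t)\,dt\le M\,E_y(s)$, uniformly in $T$, and then conclude by a standard Haraux-type lemma. We first argue for classical solutions $y$ (so that both propositions apply), and then extend by density to mild solutions using continuous dependence on the initial data from Theorem \ref{esistenza}, together with the fact that $E_y(t)$ is the square of the $\mathcal H_0$-norm of $\mathcal Y(t)$ up to a factor $1/2$.

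First I would rewrite the left-hand side of \eqref{tondonew}. Since the definition \eqref{def_energy} of the energy gives
\[
\int_{Q_s}\!\Bigl(\tfrac{y_t^2}{\sigma}+\eta y_x^2\Bigr)dx\,dt=2\int_s^T E_y(t)\,dt-\beta\int_s^T y^2(t,1)\,dt,
\]
inequality \eqref{tondonew} becomes
\[
\ve_0\!\int_s^T\! E_y(t)\,dt\le 4\Theta E_y(s)+C_2(E_y(s)-E_y(T))+\Bigl(C_3+\tfrac{\beta\ve_0}{2}\Bigr)\!\int_s^T\! y^2(t,1)\,dt,
\]
with $C_2:=\tfrac{1}{\sigma(1)}+\tfrac{1}{\eta(1)}+\tfrac{\beta}{\eta(1)}+\tfrac{K}{4}$ and $C_3:=\tfrac{\beta^2}{\eta(1)}+\tfrac{K\beta}{2}+\tfrac{\beta}{\eta(1)}+\tfrac{K}{4}$. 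Inserting the estimate of Proposition \ref{Prop3} for $\int_s^T y^2(t,1)\,dt$ and recognising that $\bigl(C_3+\tfrac{\beta\ve_0}{2}\bigr)\bigl(\tfrac{1}{\min_{[0,1]}^3\eta}+1\bigr)=C_1$, I would obtain
\[
\bigl(\ve_0-2\delta C_1\bigr)\!\int_s^T\! E_y(t)\,dt\le \Bigl[4\Theta+C_2+\bigl(C_3+\tfrac{\beta\ve_0}{2}\bigr)\mathcal{C}_\delta\Bigr]E_y(s)-C_2E_y(T),
\]
where $\mathcal{C}_\delta$ is exactly the bracket multiplying $E_y(s)$ in \eqref{Stimabo}. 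Dropping the nonpositive term $-C_2E_y(T)$, choosing any $\delta\in(0,\ve_0/(2C_1))$ so that $C_{\ve_0,\beta,K,\eta}:=\ve_0-2\delta C_1>0$, and dividing out, I recover exactly the constant $M$ defined in \eqref{M} and the uniform bound
\[
\int_s^T E_y(t)\,dt\le M\,E_y(s)\qquad\forall\,T>s>0.
\]

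Next I would let $T\to+\infty$ (using $E_y\ge 0$ and monotone convergence) to get $\int_s^{+\infty}E_y(t)\,dt\le M\,E_y(s)$ for every $s\ge 0$ (continuity at $s=0$ coming from Theorem \ref{esistenza}). Setting $F(s):=\int_s^{+\infty}E_y(t)\,dt$, this reads $F(s)\le -MF'(s)$, i.e.\ $(e^{s/M}F(s))'\le 0$, so $F(s)\le F(0)e^{-s/M}\le M\,E_y(0)e^{-s/M}$. Because $E_y$ is nonincreasing by Theorem \ref{Energiadecrescente}, for $t\ge M$ we have
\[
M\,E_y(t)\le \int_{t-M}^{t}E_y(\tau)\,d\tau\le F(t-M)\le M\,E_y(0)\,e^{-(t-M)/M}=M\,E_y(0)\,e^{1-t/M},
\]
which gives \eqref{estiener} for $t\ge M$. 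For $0\le t\le M$ the estimate is trivial since $E_y(t)\le E_y(0)\le E_y(0)e^{1-t/M}$.

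The only delicate points in this program are purely bookkeeping: correctly tracking that the coefficient of $\int_s^T y^2(t,1)\,dt$ after substituting the expression of $\int_{Q_s}(\tfrac{y_t^2}{\sigma}+\eta y_x^2)$ into \eqref{tondonew} produces precisely $C_3+\tfrac{\beta\ve_0}{2}$, and that multiplying by $\tfrac{1}{\min_{[0,1]}^3\eta}+1$ from \eqref{Stimabo} yields the constant $C_1$ used in the smallness condition on $\delta$. The transition from classical to mild solutions is a standard density argument thanks to the continuous dependence stated in Theorem \ref{esistenza}, since every term in \eqref{estiener} is continuous with respect to the $\mathcal H_0$-norm of the initial data. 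I expect no real obstacle beyond this algebraic matching and the careful use of the monotonicity of $E_y$ in the final Haraux-type step.
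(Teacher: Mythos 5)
Your proposal is correct and follows essentially the same route as the paper: combine Propositions \ref{Prop3} and \ref{Prop4}, track that the $\beta\ve_0/2$ term and the factor $\frac{1}{\min_{[0,1]}^3\eta}+1$ reproduce $C_1$, absorb the $2\delta C_1\int_s^T E_y$ term for $\delta<\ve_0/(2C_1)$ to get $\int_s^T E_y(t)\,dt\le M E_y(s)$, and conclude via the Haraux-type Lemma \ref{Lemma4.6} and a density argument for mild solutions. The only difference is that you also supply a proof of Lemma \ref{Lemma4.6}, which the paper states without proof; your argument for it is correct.
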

\begin{proof} 
As usual, let us start assuming that $y$ is a classical solution. 
If $y$ is the mild solution associated to the initial data $(y_0, y_1) \in \mathcal H_0$, consider a sequence $\{(y_0^n, y_1^n)\}_{n \in  \N} \in D(\mathcal A)$ that approximate $(y_0, y_1)$ and let $y^n$ be the classical solution of \eqref{wave} associated to $(y_0^n, y_1^n)$. With standard estimates coming from Theorem \ref{generator}, we can pass to the limit in \eqref{estiener} written for $y^n$ and obtain the desired result.

So, let $y$ be a classical solution. By \eqref{tondonew} and \eqref{Stimabo} we have
\[
\begin{aligned}
\ve_0&\int_s^T E_y(t)dt 
\le  4 \Theta E_y(s)  + \left( \frac{1}{\sigma(1)} + \frac{1}{\eta(1)}+\frac{\beta}{\eta(1)}+ \frac{K}{4} \right)E_y(s) \\
&+  \left(\frac{\beta^2}{\eta(1)}+ \frac{K\beta}{2}+\frac{\beta}{\eta(1)}+ \frac{K}{4}+ \frac{\beta\ve_0}{2}\right) \int_s^Ty^2(t,1)dt\\
&\le \left( 4\Theta + \frac{1}{\sigma(1)} + \frac{1}{\eta(1)}+\frac{\beta}{\eta(1)}+ \frac{K}{4} \right)E_y(s) \\
&+  \left(\frac{\beta^2}{\eta(1)}+ \frac{K\beta}{2}+\frac{\beta}{\eta(1)}+ \frac{K}{4} + \frac{\beta\ve_0}{2}\right) \left(2+  \frac{2C_{HP}}{\min_{[0,1]}^3\eta}+\frac{1}{\delta}+\frac{1}{\delta}\frac{\max_{[0,1]}\eta+ C_{HP}}{\min_{[0,1]}^2 \eta} \right)E_y(s) \\
&+  2\delta  C_1\int_s^T E_y(t) dt.
\end{aligned}
\]
Hence
\[
\begin{aligned}
&\left[ \ve_0 -  2\delta C_1\right]\int_s^T E_y(t)dt 
\le \left( 4\Theta + \frac{1}{\sigma(1)} + \frac{1}{\eta(1)}+\frac{\beta}{\eta(1)}+ \frac{K}{4} \right)E_y(s) \\
&+  \left(\frac{\beta^2}{\eta(1)}+ \frac{K\beta}{2}+\frac{\beta}{\eta(1)}+ \frac{K}{4}+ \frac{\beta\ve_0}{2}\right) \left(2+  \frac{2C_{HP}}{\min_{[0,1]}^3\eta}+\frac{1}{\delta}+\frac{1}{\delta}\frac{\max_{[0,1]}\eta+ C_{HP}}{\min_{[0,1]}^2 \eta} \right)E_y(s).
\end{aligned}
\]
Choosing $\ds
\delta\in \left( 0, \frac{\ve_0}{2C_1}\right),
$
we have
\[\ve_0 -  2\delta C_1>0.\]
Hence, we can apply Lemma \ref{Lemma4.6} below, obtaining
\[
E_y(t) \le E_y(0) e^{1- \frac{t}{M}},
\]
where  $M$ is as in \eqref{M}.
\end{proof}
\begin{Lemma}\label{Lemma4.6}
Assume that $E: [0, +\infty) \rightarrow [0, +\infty)$ is a nonincreasing function and that there is a constant $M>0$ such that
\[
\int_t^\infty E(s)ds \le ME(t), \quad \forall \; t \; [0, +\infty).
\]
Then
\[
E(t) \le E(0) e^{1- \frac{t}{M}},\quad \forall\; t \; [0, +\infty).
\]
\end{Lemma}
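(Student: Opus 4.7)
The statement is a classical Komornik-type lemma, and the cleanest route is via an ODE-style argument for the tail integral. My plan is to introduce
\[
f(t):=\int_t^{+\infty} E(s)\,ds, \qquad t\ge 0,
\]
which is well defined (and finite) because the hypothesis at $t=0$ gives $f(0)\le M E(0)<+\infty$. Since $E\ge 0$ is nonincreasing, it is locally integrable, so $f$ is absolutely continuous with $f'(t)=-E(t)$ for a.e.\ $t\ge 0$, and $f$ is nonincreasing.

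The key step is to translate the hypothesis $f(t)\le M E(t)$ into the differential inequality
\[
f'(t)+\frac{1}{M}\,f(t)\le 0 \quad \text{for a.e. } t\ge 0,
\]
then multiply by the integrating factor $e^{t/M}$ to obtain $\bigl(e^{t/M}f(t)\bigr)'\le 0$ a.e. Since $e^{t/M}f(t)$ is absolutely continuous, this yields
\[
f(t)\le f(0)\,e^{-t/M}\le M E(0)\,e^{-t/M}\qquad \forall\,t\ge 0.
\]

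The last move is to pass from a bound on the tail $f(t)$ to a bound on $E(t)$ itself, and this is where the monotonicity of $E$ is used in a second way. For $t\ge M$, since $E$ is nonincreasing,
\[
M E(t)\le \int_{t-M}^{t} E(s)\,ds = f(t-M)-f(t)\le f(t-M)\le M E(0)\,e^{-(t-M)/M}=M E(0)\,e^{1-t/M},
\]
which gives $E(t)\le E(0)e^{1-t/M}$ for $t\ge M$. For $t\in[0,M]$ the conclusion is trivial because $E(t)\le E(0)\le E(0)e^{1-t/M}$, since $1-t/M\ge 0$ there. Combining the two regimes yields the claimed inequality for all $t\ge 0$.

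I do not anticipate any serious obstacle: the only mild subtlety is justifying $f'=-E$ a.e.\ without assuming continuity of $E$, which follows from the fact that a nonincreasing nonnegative locally integrable function makes $f$ absolutely continuous on bounded intervals via the Lebesgue differentiation theorem. Everything else is a one-line integrating-factor computation plus a comparison using monotonicity on the interval $[t-M,t]$.
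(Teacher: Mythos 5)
Your argument is correct, and in fact the paper does not prove this lemma at all: it is stated without proof, being the classical integral-inequality lemma of Komornik/Haraux used in stabilization theory, so there is no in-paper proof to compare against. Your route is exactly the standard one for that classical result: setting $f(t)=\int_t^\infty E(s)\,ds$, noting $f$ is locally Lipschitz (since $0\le E\le E(0)$) with $f'=-E$ a.e., converting the hypothesis into $f'+\tfrac1M f\le 0$, integrating with the factor $e^{t/M}$ to get $f(t)\le ME(0)e^{-t/M}$, and then using monotonicity of $E$ on $[t-M,t]$ for $t\ge M$ together with the trivial bound $E(t)\le E(0)\le E(0)e^{1-t/M}$ on $[0,M]$; all steps, including the constant $e^{1-t/M}$, check out.
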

\section{Appendix}

For the readers' convenience, in this section we prove a technical lemma, used throughout the previous sections.
\begin{proof}[Proof of Lemma $\ref{lemmalimite}$]
1. We will actually prove an equivalent fact, usually appearing in integrations by parts, namely that
\[
\lim_{x \rightarrow 0} \eta (x)u(x) y'(x)=0.
\]
For this, consider the function
\[
z(x):=
\eta(x) u(x)  y'(x), \quad x \in (0,1].
\]
Observe that
\[
\int_0^1|z|dx \le C \|u\|_{L^2(0,1)}\|y'\|_{L^2(0,1)}.
\]
Moreover
\[
z'(x)= (\eta y')'u + \eta y'u'
\]
and, by H\"older's inequality, 
\[
\int_0^1|(\eta y')'u| dx= \int_0^1 \sqrt{\sigma}(\eta y')' \frac{u}{\sqrt{\sigma}} dx\le  \|\sigma(\eta y')'\|_{{\frac{1}{\sigma}}}\|u\|_{{\frac{1}{\sigma}}}
\]
and
\[
\int_0^1\eta|y'u'|dx\le C\|y'\|_{L^2(0,1)}\|u'\|_{L^2(0,1)}.
\]
Thus $z'$ is summable on $[0,1]$.  Hence $z \in W^{1,1}(0,1) \hookrightarrow C[0,1]$ and there exists
\[
\lim_{x \rightarrow 0}z(x)=\lim_{x \rightarrow 0} \eta(x) u(x)  y'(x)=L \in \R.
\]
We will prove that $L=0$.  If $L \neq 0$ there would exist a  neighborhood $\cal I$ of $0$ such that
\[
\frac{|L|}{2} \le |\eta y'u|,
\]
for all $x \in \cal I$;
but, by  H\"older's inequality,
\[
|u(x)|\le \int_0^x |u'(t) |dt \le  \sqrt{x} \|u'\|_{L^2(0,1)}.
\]
Hence
\[
\frac{|L|}{2} \le |\eta y'u| \le \|\eta\|_{\infty} |y'| \sqrt{x} \|u'\|_{L^2(0,1)}
\]
for all $x \in \cal I$. This would imply that
\[
|y'| \ge \frac{|L|}{2 \|u'\|_{L^2(0,1)} \|\eta\|_{\infty} \sqrt{x} }
\]
in contrast to the fact that $ y' \in L^2(0,1)$.

Hence $L=0$ and the conclusion follows.

2. It is enough to write
\[
xu'(\eta u')'=\frac{xu'}{\sqrt{\sigma}}\sqrt{\sigma}(\eta u')'
\]
and notice that, by Hypothesis \ref{Ass0},
\[
\left|\frac{xu'}{\sqrt{\sigma}}\right|^2\leq \|\eta\|_\infty \frac{x^2}{a(x)}(u')^2\leq C\|\eta\|_\infty(u')^2\in L^1(0,1),
\]
for a positive constant $C$,
and
\[
(\sqrt{\sigma}(\eta u')')^2=\frac{(Au)^2}{\sigma}\in L^1(0,1).
\]

3. 
Set $z(x)=x\eta(x)(u'(x))^2$. Of course, $z\in L^1(0,1)$. Moreover,
\[
z'=\eta(u')^2+x\eta'(u')^2+2x\eta u'u''=2xu'(\eta u')'-x\eta'(u')^2+ \eta (u')^2.
\]
By the previous point, $xu'(\eta u')'\in L^1(0,1)$, while
\[
\left|x\eta'(u')^2\right|=\left|x\eta \frac{b}{a}(u')^2\right|\leq C(u')^2
\]
by \eqref{defM}. Clearly, $\eta (u')^2 \in L^1(0,1)$, thus $z'\in L^1(0,1)$, so that $z\in W^{1,1}(0,1)$ and there exists $\lim_{x\to 0}z(x)=L\in\R$. If $L\neq0$, sufficiently close to $x=0$ we would have that
\[
(u'(x))^2\geq \frac{|L|}{2\eta x}\not\in L^1(0,1),
\]
while $u\in H^1(0,1)$. The conclusion follows since $\eta$ is bounded away from 0.

4. Proceed as above, observing that
\[
\left|x\eta'(u')^2\right|=\left|x\eta \frac{b}{a}(u')^2\right|\leq \left\| \frac{xb}{a}\right\|_{L^\infty(0,1)}\left\| \eta\right\|_{L^\infty(0,1)}(u')^2
\]
and that
\[
|xu'(\eta u')'|= \left| \frac{x}{\sqrt{a}}\sqrt{\eta}u' \right|\cdot\left| \sqrt{\sigma}(\eta u')'\right|.
\]

5. Set $\ds z:= \frac{x}{a}u^2(x)$. Then $z \in L^1(0,1)$. Indeed
\[
\int_0^1\frac{x}{a} u^2(x)dx \le \frac{1}{\min_{[0,1]}\eta} \int_0^1 \frac{u^2}{\sigma}dx.
\]
Moreover
\[
z'= \frac{u^2}{a}+ 2\frac{xuu'}{a} - \frac{a'x}{a^2}u^2;
\]
thus, for a suitable $\ve>0$ given by Hypothesis \ref{Ass0},
\[
\begin{aligned}
\int_0^\ve|z'|dx &\le \frac{1}{\min_{[0,1]}\eta}\int_0^1 \frac{u^2}{\sigma}dx + 2 \left(\int_0^\ve \frac{x^2(u')^2}{a}dx\right)^{\frac{1}{2}}\left( \int_0^1 \frac{u^2}{a}dx\right)^{\frac{1}{2}} + K\int_0^1 \frac{u^2}{a}dx\\
&\le \frac{1+K}{\min_{[0,1]}\eta}\int_0^1 \frac{u^2}{\sigma}dx + \frac{2\ve^2}{a(\ve)\min_{[0,1]}\eta}\left(\int_0^1 (u')^2dx\right)^{\frac{1}{2}}\left( \int_0^1 \frac{u^2}{\sigma}dx\right)^{\frac{1}{2}}.
\end{aligned}
\]
This is enough to conclude that  $z \in W^{1,1}(0,1)$ and thus there exists  $\lim_{x\to 0}z(x)=L\in\R$. If $L\neq0$, sufficiently close to $x=0$ we would have that
\[
\frac{u^2(x)}{a}\geq \frac{|L|}{2x}\not\in L^1(0,1),
\]
while $\ds\frac{u^2}{a} \in L^1(0,1)$ (since $\ds\frac{u^2}{\sigma} \in L^1(0,1)$). 
\end{proof}

\section*{Acknowledgments}
The authors wish to thank the anonymous referee for her/his detailed report, which has improved the presentation of the paper.


\begin{thebibliography}{99}

\bibitem{alabaumon}
F. Alabau-Boussouira, On Some Recent Advances on Stabilization for Hyperbolic Equations. In: Control of Partial Differential Equations. Lecture Notes in Mathematics vol 2048. Springer, Berlin, Heidelberg, 2012.



\bibitem{alabau} F. Alabau-Boussouira, P. Cannarsa, G\"unter Leugering, {\it Control and stabilization of degenerate wave equations}, SIAM J. Control Optim., \textbf{55} (2017), 2052--2087.
\bibitem{acf}
F. Alabau-Boussouira, P. Cannarsa, G Fragnelli, {\sl Carleman
estimates for degenerate parabolic operators with applications to
null controllability}, J. Evol. Equ. \textbf{6} (2006), 161--204.
\bibitem{bc}
G. Bastin, J.-M. Coron, Stability and Boundary Stabilization of 1-D Hyperbolic Systems, Progress in Nonlinear Differential Equations and Their Applications, Birkh\"auser, 2016.

\bibitem{daprato} A. Bensoussan, G. Da Prato, M. C. Delfour, S.K. Mitter, {\it Representation and Control of Infinite Dimensional Systems} 2nd ed., Birkh\"auser, Boston 2007.

\bibitem{bfmu} I. Boutaayamou, G. Fragnelli, D. Mugnai, {\it Boundary controllability for a degenerate wave
equation in non divergence form with drift}, SIAM J. Control Optim. {\bf 61} (2023), 1934--1954.

\bibitem{cfr1}
 P. Cannarsa, G. Fragnelli, D. Rocchetti, {\it Controllability results for a class of one-dimensional degenerate parabolic
problems in nondivergence form}, J. Evol. Equ. {\bf8} (2008), 583-616.


\bibitem{cfr} P. Cannarsa, G. Fragnelli, D. Rocchetti, {\it Null controllability of degenerate parabolic operators with drift}, Netw. Heterog. Media, \textbf{2}  (2007), 693--713.


\bibitem{nagel}
K.-J. Engel, R. Nagel, {\it One-parameter semigroups for linear evolution equations}, GTM 194, Springer New York, 2000.


\bibitem{favya}
A. Favini, A. Yagi,  {\it Degenerate Differential Equations in Banach Spaces}, Pure and Applied Mathematics: A Series of
   Monographs and Textbooks, \textbf{215}, M.Dekker, New York, 1998.


\bibitem{F1}   
  W. Feller,  {\it The parabolic differential equations and the
   associated semigroups of transformations}, Ann. Math., \textbf{55} (1952),
   468--519.

\bibitem{f2016}
G. Fragnelli, {\it Interior degenerate/singular parabolic equations in nondivergence form: well-posedness and Carleman estimates},
J. Differential Equations \textbf{260} (2016), 1314--1371.
\bibitem{f2018}
G. Fragnelli, {\it Carleman estimates and null controllability for a degenerate population model}, J. Math. Pures Appl., \textbf{115}
(2018), 74--126.

\bibitem{f2020}
G. Fragnelli, {\it Null controllability for a degenerate population model in divergence form via Carleman estimates}, Adv.
Nonlinear Anal., \textbf{9}
(2020), 1102--1129.

\bibitem{fmdcdss}
G. Fragnelli, D. Mugnai, {\it Stability of solutions for nonlinear wave equations with a positive-negative damping},
 Discrete Contin. Dyn. Syst. Ser. S  {\bf4}(3) (2011), 615--622.

\bibitem{sicon}
G. Fragnelli, D. Mugnai, {\it Stability of solutions for some classes of nonlinear damped wave equations},
AM J. Control Optim. {\bf 47} (2008), 2520--2539.

\bibitem{gu}
M. Gueye, {\it Exact boundary controllability of 1-d parabolic and hyperbolic degenerate equations}, SIAM J. Control Optim.
{\bf 52}(2014), 2037--2054.

\bibitem{HMV}
A. Haraux, P. Martinez, J. Vancostenoble,
{\it Asymptotic stability for intermittently controlled second order evolution equations},
SIAM J. Control Optim. {\bf 43} (2005), 2089--2108.

\bibitem{JLJ}
J. Jost, X. Li-Jost, {\it Calculus of variations}, Cambridge University  (1998).

\bibitem{MV2002}
P. Martinez, J. Vancostenoble,
{\it Stabilization of the wave equation by on-off and positive-negative feedbacks},
ESAIM: Control Optim. Calc. Var. {\bf 7} (2002), 335--378.

   \bibitem{mp}   
   G. Metafune, D. Pallara,
   {\it Trace formulas for some singular differential operators
   and applications}, Math. Nachr., \textbf{211} (2000), 127--157.

\bibitem{o} M. Ouzahra, {\it Controllability of the semilinear wave equation governed by a multiplicative control}, Evol. Equ. Control Theory \textbf{8} (2019), 669--686. 

\bibitem{PS}
P. Pucci and J. Serrin, {\it Asymptotic stability for non-autonomous dissipative wave systems}, Comm. Pure Appl. Math. {\bf XLIX} (1996), 177--216.

\bibitem{zuazua0}  Y. Privat, E. Tr\'elat, E. Zuazua, {\it Optimal observation of the one-dimensional wave equation}, J. Fourier Anal. Appl. \textbf{19} (2013), 514--544.
\bibitem{zuazua1} Y. Privat, E. Tr\'elat, E. Zuazua, {\it Optimal location of controllers for the one-dimensional wave equation}, Annales de l'IHP. Analyse non Lin\'eaire \textbf{30} (2013), 1097--1126. 

\bibitem{s} A. Sengouga, {\it Exact boundary observability and controllabiity of the wave equation in an interval with two moving endpoints}, Evol. Equ. Control Theory \textbf{9} (2020),1--25.

\bibitem{ZG}
M. Zhang, H. Gao, {\it Null Controllability of some degenerate wave equations}, J. Syst. Sci. Complex {\bf 30} (2017), 1027--1041.


\end{thebibliography}
\end{document}